
\documentclass[final,10pt,3p,times,twocolumn]{elsarticle}




\usepackage{amssymb}


\usepackage{amsmath}
\usepackage{amssymb}
\usepackage{amsfonts}
\usepackage{amsthm}
\usepackage{graphicx}
\usepackage{soul}
\usepackage[colorlinks=true, allcolors=blue]{hyperref}

\usepackage{tikz}
\usepackage{caption, subcaption}
\usepackage{cleveref}

\usepackage{graphics}
\usepackage{array}



\newtheorem{theorem}{Theorem}[section]
\newtheorem{corollary}[theorem]{Corollary}
\newtheorem{lemma}[theorem]{Lemma}

\newtheorem{remark}[theorem]{Remark}

\theoremstyle{definition}

\newcommand{\leo}[1]{\textcolor{black!50!black}{#1}}
\definecolor{lightgreen}{RGB}{180,255,150}

\newcommand{\vince}[1]{\textcolor{black!50!black}{#1}}
\newcommand{\rev}[1]{\textcolor{black!50!black}{#1}}

\definecolor{lightblue}{RGB}{180,180,255}

\journal{Information Processing Letters}

\begin{document}

\begin{frontmatter}



\title{Polynomial invariants for cactuses}


\author[tu]{Leo van Iersel}
\ead{l.j.j.vaniersel@tudelft.nl}

\author[uea]{Vincent Moulton}
\ead{v.moulton@uea.ac.uk}

\author[tu]{Yukihiro Murakami}
\ead{y.murakami@tudelft.nl}

\address[uea]{School of Computing Sciences, University of East Anglia, NR4 7TJ, Norwich, United Kingdom}

\address[tu]{Delft Institute of Applied Mathematics, Delft University of Technology, Mekelweg 5, 2628 CD, Delft, the Netherlands}




\begin{abstract}
Graph invariants are a useful tool in graph theory.
Not only do they 
encode useful information about the graphs to which they are associated, but 
complete invariants can be used to distinguish 
between non-isomorphic graphs.
Polynomial invariants for graphs such as the well-known Tutte polynomial
have been studied for several years, and recently there has been
interest to also define such invariants for phylogenetic networks,
a special type of graph that arises in the
area of evolutionary biology. Recently Liu gave a complete invariant for (phylogenetic) trees. 
However, the polynomial invariants defined thus far for
phylogenetic networks that are not trees require vertex labels and either contain a large number of variables, or they have exponentially many terms in the number of reticulations. 
This can make it difficult to compute these polynomials and to use them to analyse unlabelled networks.
In this paper, we shall show how to circumvent some of these 
difficulties for rooted cactuses and  cactuses.
As well as being important in other areas
such as operations research, rooted cactuses
contain some common classes of phylogenetic networks such
phylogenetic trees and level-1 networks.
More specifically, we define a polynomial
$F$ that is a complete invariant for the class of rooted cactuses
without vertices of indegree 1 and outdegree 1 that has 5 variables,
and a polynomial $Q$ that is a complete invariant
for the class of rooted cactuses
that has 6 variables \vince{whose degree can be bounded linearly in terms 
of the size of the rooted cactus}. We also explain how to extend the~$Q$ polynomial
to define a complete invariant for leaf-labelled rooted cactuses
as well as (unrooted) cactuses.
\end{abstract}

\begin{keyword}
Graph polynomials \sep Polynomial invariants \sep Cactus graphs \sep Phylogenetic networks


\end{keyword}

\end{frontmatter}




\section{Introduction}

Given a class $\mathcal C$ of graphs, and a polynomial $P(C)$ assigned to each
element $C$ in this class, we call $P$ an {\em invariant} of $\mathcal C$
if $P(C)=P(C')$ when $C$ is isomorphic to $C'$ for all $C,C' \in \mathcal C$; if 
$P(C)=P(C')$ also implies that $C$ is isomorphic to $C'$ we call $P$
a {\em complete invariant} for the class (see e.g. \citep{liu2019generalized}).
Polynomial invariants have been defined for various classes of graphs
\vince{(see e.g. \cite{bollobas1998modern}),}
including the extensively studied {\em Tutte polynomial} 
(see e.g. \citep{awan2020tutte} and the references 
therein).  These usually encode useful information about the 
graph (e.g. number of edges, spanning forests), but they are not always 
complete invariants, and finding complete invariants
for graphs remains an important area of research in graph theory 
and computer science \citep{liu2019generalized}.

Recently there has been interest in defining polynomial invariants 
for graphs that arise in the field of phylogenetics~\citep{liu2021tree,pons2022polynomial}.
Such graphs are called \emph{phylogenetic networks}, and 
they often come equipped with a leaf-labelling of the 
vertices corresponding to some collection 
of species (see e.g. \cite[Chapter 10]{steel2016phylogeny} for a recent overview).  
Phylogenetic networks are commonly 
used to elucidate the evolutionary history for a collection of species
that has undergone non-treelike evolution (such as e.g. bacteria or plants)~\citep{bapteste2013networks}, 
and an important problem asks to find ways to distinguish between
distinct networks to compare evolutionary 
histories~\vince{(see e.g. \citep{cardona2008metrics})}.
\vince{Hence} finding complete invariants for special classes
of phylogenetic networks could be useful as, for
example, they would yield metrics on networks in question
(see e.g. \citep{liu2022analyzing} where this 
approach has been recently used to 
study how the influenza virus evolves).

In this paper, we focus on the problem of finding 
complete polynomial invariants for the classes of rooted cactuses
and cactuses
(with or without leaf-labellings), 
two special classes of phylogenetic networks~\citep{hayamizu2020recognizing,huber2021space} 
which also
arise in other areas such as \vince{operations research ~\citep{kang20142}} and genome comparisons~\citep{paten2011cactus}. 
More specifically, recall
that a {\em cactus} (also known as a \emph{Husimi} tree) is a connected undirected graph in which
any two cycles are edge disjoint~\citep{husimi1950note};
a {\em rooted cactus} is a directed acyclic graph
with a single source or root whose
underlying undirected graph is a cactus, and 
such that there is a directed path from the root to any vertex in 
the graph~\citep{huber2021space}
(see \Cref{fig:cactus} for an example of a cactus and a rooted cactus).
Observe that rooted cactuses are related to, but different from \emph{directed cactuses},
which are strongly connected directed graphs where each edge is contained in exactly one directed cycle~\citep{balaji2020resistance}.
\vince{Also note that in general a rooted graph may also mean any type of graph 
that contains a distinguished vertex, which is slightly different from our meaning of rooted.}
If the underlying graph of a rooted cactus is an 
undirected tree (or, for brevity, a tree), we
call it a {\em rooted tree} (also known as an {\em arborescence}). 
As well as trees, there are various other subclasses 
of unrooted and rooted cactuses such as
(unlabelled) level-1 networks and galled trees
\vince{(see e.g. \cite[p.247]{steel2016phylogeny})}.

In previous related work, for \vince{trees}, one 
of the first complete invariants for rooted 
trees was introduced in 
\citep{gordon1989greedoid}. In fact, this polynomial is defined as 
the restriction of a certain greedoid polynomial (the Tutte polynomial of a greedoid)
which arises from the fact that any rooted directed graph gives rise to
a certain greedoid structure \citep{gordon1989greedoid}.
In \citep{tedford2009tutte} a
modification was made to this polynomial to also give a complete invariant for rooted undirected unicyclic graphs
(note that rooted undirected unicyclic graphs are similar but 
different from rooted cactuses, as the former is undirected and
contains at most one cycle, while the 
latter is directed and can contain more than one cycle in the underlying graph).
Interestingly, the problem of defining a complete invariant
for (undirected) trees remained open until a solution was recently proposed 
in \citep{liu2021tree}, in which Liu defined
a new complete invariant $B$ for rooted trees, which was extended to the class of
\vince{(undirected) trees}.

More recently, polynomial 
invariants have also been introduced for classes of phylogenetic networks,
building on Liu's approach. In \citep{liu2021tree},
Liu showed how to extend the polynomial invariant $B$ to leaf-labelled trees, and
more recently a
polynomial invariant was introduced for 
rooted binary internally multi-labelled phylogenetic networks 
(where vertices with indegree at least~2, or \emph{reticulations},
are distinctly labelled) in \citep{pons2022polynomial}.
This was shown to be a complete invariant 
for a certain subclass of such partly-labelled networks.
In~\citep{janssen2021comparing},
a polynomial invariant is defined for rooted unlabelled networks based on their spanning trees,
and it is briefly mentioned that the polynomial is a complete invariant 
restricted to the class of so-called rooted, 
leaf-labelled\footnote{In \citep{janssen2021comparing} this result is stated to hold for 
unlabelled networks, but in a
personal correspondence with the authors we 
were informed that it should be stated to hold for labelled networks.}
{\em tree-child} networks.

The polynomial invariants defined thus far for
phylogenetic networks (that are not trees) require vertex labels and either contain a large number of variables, or they have exponentially many terms in the number of reticulations. 
This can make it difficult to compute these polynomials and to use them to analyse unlabelled networks.
In this paper, we shall show how to circumvent some of these 
difficulties for rooted cactuses and  cactuses. More specifically, we
begin in \Cref{sec:prelim} by introducing a complete invariant for a
certain class of vertex-labelled trees (which we call
\emph{special pairs}), which generalizes the tree polynomial presented in \citep{liu2021tree}. 
We then use this new invariant to introduce two new polynomial invariants for 
rooted cactuses.

Our first polynomial invariant $F$ given in 
\Cref{sec:unfolding} is based on \emph{unfolding} a network, a technique
used in  \citep{pons2022polynomial} and inspired by~\citep{huber2016folding}.
When unfolding a rooted cactus, copies of the subnetwork rooted at every 
reticulation are created.
Labels are subsequently added to the unfolded network, which is
a directed tree, to create a special pair. In Theorem~\ref{thm:cactus-invariant}
we give a one-to-one 
correspondence between a rooted cactus and its labelled unfolded network.
This immediately gives a polynomial invariant $F$
for rooted cactuses using the special pair definition from before. 
The polynomial $F$ has 5 variables when 
leaves are unlabelled, and at most~$n+4$ variables when its~$n$ leaves are labelled.
In addition, as we shall show, $F$ is a 
complete invariant for the class of rooted cactuses 
without elementary vertices, that is vertices with 
indegree and outdegree one. Note that in phylogenetics this is not
a strong assumption since elementary vertices are 
commonly excluded from phylogenetic networks
as they do not correspond to evolutionary events such as speciation or hybridization.

Our second polynomial invariant $Q$ for rooted cactuses which is 
given in \Cref{sec:expanding},
is based on \emph{expanding} a rooted cactus, a process in which we 
encode every reticulation 
with two appended leaves and an added edge.
Each iteration of the expansion removes one reticulation 
vertex from the network; 
labelling the vertices creates a special pair, that 
uniquely encodes the original rooted cactus.
As we prove in Theorem~\ref{thm:cactus-invariant2},
this gives a complete invariant for the class 
of rooted cactuses (where we now allow for elementary vertices), which we call~$Q$.
The $Q$ polynomial has 6 variables when leaves are unlabelled, and at most~$n+5$ variables when its~$n$ leaves are labelled.
We also show that the degree of the~$Q$ polynomial is 
linear in the number of leaves and the reticulations of the network.

In \Cref{sec:unrooted}, we show how to define a 
complete invariant for the class of \vince{leaf-labelled rooted cactuses}, 
by generalising an approach used in \citep{liu2021tree}. 
\vince{We also explain how to use $Q$ to obtain} a 
complete invariant for \vince{(undirected) cactuses, either with
or without leaf-labels}. Finally, in \Cref{sec:discussion}, we discuss our findings and 
give suggestions for future research.

\section{Liu's polynomial revisited}\label{sec:prelim}

In this section, we define a polynomial invariant $P$
for rooted trees with certain vertex labellings which generalizes Liu's polynomial for 
rooted (leaf-labelled) trees.
We shall use this polynomial to define our new invariants for
cactuses in the following sections.

Let $\mathcal T$ denote the class of rooted trees.
For $T \in \mathcal T$ we let $\rho_T$ denote the root of $T$, $V(T)$ and
$L(T)$ be the vertex and leaf-sets of $T$ respectively, and 
$\bullet$ denote the single vertex tree in $\mathcal T$. 
The {\em stem} 
of a tree $T \in \mathcal T$ is 
the shortest directed path in $T$ that starts at the root of $T$ 
and does not end in a vertex of outdegree~$1$. We call a stem \emph{trivial} if it consists only of the root.
Note that if $T$ is the single vertex tree we regard it as a root and a leaf.
Given a set $\{T_1,\dots,T_k\}$, $k \ge 1$, of rooted trees we 
let $\wedge(T_1,\dots,T_k)$ denote the rooted tree obtained
by taking a single vertex $v$ and joining this vertex to the
root $\rho_i$ of each tree $T_i$ by an arc $(v,\rho_i)$.

Now, given an arbitrary rooted tree 
$T = \wedge(T_1,\dots,T_k)$ where $T_i \in \mathcal T$, define the 2-variable 
polynomial $B(T) \in \mathbb Z[x,y]$ by recursively 
applying the following rules \citep[Def. 2.1]{liu2021tree}: 
\begin{itemize}
\item[(1)] $B(\bullet)=x$, and
\item[(2)] $B(T) = y + \prod_{i=1}^k B(T_i)$. 
\end{itemize}

In \citep[Theorem 2.8]{liu2021tree} it is shown that $B$ is a
complete invariant for $\mathcal T$. Moreover 
in \citep[Corollary 3.5]{liu2021tree}, it is proven that a 
complete invariant $B_l$ for the class of leaf-labelled, rooted trees can be
defined as follows. For $T = \wedge(T_1,\dots,T_k)$ where $T_i \in \mathcal T$,
and $\bullet_j$ denoting the single vertex tree with label $x_j$,
define $B_l(T)$ by replacing rules (1) and (2) with \citep[Def. 3.4]{liu2021tree}:
\begin{itemize}
\item[(1')] $B_l(\bullet_j)=x_j$, and
\item[(2')] $B_l(T) = y + \prod_{i=1}^k B_l(T_i)$. 
\end{itemize}
Note that different leaf vertices in $T$ may have the same label.

We now generalize these polynomial invariants to 
rooted trees whose vertices are labelled. 
\rev{We call a maximal directed path of outdegree-1 vertices a \emph{string}.}

For any set of variables~$K$, we call a pair~$(T,\lambda)$ a \emph{vertex-labelled rooted tree} 
if~$T\in\cal{T}$ and~$\lambda:V(T)\to K$.
Let $K \subseteq \{x_1,x_2\dots\}\cup\{y,z\} \cup\rev{\{s\}}$,
be a fixed set of variables (possibly infinite).
We call the pair $(T,\lambda)$ {\em special} if:
\begin{itemize}
	\item[(i)] $\lambda(L(T)) \in \{x_1,x_2\dots\}$, 
	\item [(ii)] If $W= (w_1,\dots,w_p)$ is a string in $T$, then
	$\lambda(w_1) \in \{y,z\}$ and if $p >1$, then $\lambda(w_i)=y$ for all $2 \le i \le p$, and
	\item[(iii)] For every vertex $v \in V(T)$ that is not in $L(T)$ or in a string, $\lambda(v) \in \{y,z\}\cup S$.
\end{itemize}

Hence, basically, the vertices of a special pair can be labelled arbitrarily as 
long as the labelling of the leaves is disjoint from the labelling of the internal vertices and strings are labelled $(z,y,y,\dots)$ or $(y,y,\ldots)$. Moreover, we shall see below, given 
any $T \in \mathcal T$, there is some $\lambda$ so that $(T,\lambda)$ is a special pair.
Note also that if $T = \wedge(T_1,\dots,T_k)$ where $T_i \in \mathcal T$, then
$(T_i, \lambda_i=\lambda|_{V(T_i)})$ is a special pair 
for all $1 \le i \le k$.

We now define a polynomial invariant $P_K$ for the class of vertex-labelled trees.
Given a vertex-labelled tree 
$(T,\lambda)$ define $P_K(T,\lambda)$ recursively as follows:
\begin{itemize}
	\item If $T=(\{v\},\emptyset)$, set $P_K(T,\lambda) = \lambda(v)$. 
	\item If $T = \wedge (T_1,\dots,T_k)$, set $P_K(T,\lambda)= \lambda(\rho_T) + \prod_{i=1}^k P_K(T_i,\lambda_i)$.
\end{itemize}
To ease notation, in case the set $K$ is clear from the context, we denote $P_K(T,\lambda)$ by $P(T,\lambda)$.

An example of a special pair is given by setting $x=x_1$ and $K=\{x,y\}$, 
and, for any rooted tree $T \in \mathcal T$, defining $\lambda:V(T) \to K$
by $\lambda(v)=y$ if $v$ is an internal vertex and $\lambda(v)=x$ if
$v$ is a leaf. Then $P(T,\lambda)=B(T)$ is in fact Liu's polynomial for $T$.

For another example, set $K=\{x_1,x_2,\dots\} \cup\{y\}$ (note this is an infinite set).
Suppose $T \in \mathcal T$ is a leaf-labelled tree 
with $\phi: L(T) \to \{x_1,x_2,\dots,\leo{x_n}\}$ where $n \ge 1$. Then define $\lambda_L:V(T) \to K$, 
by setting $\lambda_L(v)=y$ if $v$ is an internal vertex and $\lambda_L(v)=\phi(v)$ if
$v$ is a leaf. Then $P(T,\lambda_L)=B_l(T)$ is Liu's polynomial for~$T$ with leaf-labels.

We say that special pairs $(T,\lambda)$ and $(T',\lambda')$ 
are \emph{isomorphic} if $T$ is isomorphic to $T'$ via a map $\psi:V(T) \to V(T')$
such that $\lambda(v)=\lambda'(\psi(v))$ for all $v \in V(T)$.
We now show that $P$ is a complete invariant for special pairs.

\begin{theorem}\label{variables}
	Suppose that $(T,\lambda)$ and $(T',\lambda')$ are special pairs.
$P(T,\lambda) = P(T',\lambda')$ if and only if $(T,\lambda)$ is isomorphic to $(T',\lambda')$.
\end{theorem}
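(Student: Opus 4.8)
The plan is to prove both directions, with the forward direction (equal polynomials imply isomorphic special pairs) being the substantive one, since the reverse direction is immediate: an isomorphism of special pairs respects the labelling, so by induction on the recursive structure of $P$ the polynomials agree. For the forward direction I would argue by induction on $|V(T)|$, following the strategy used by Liu for the polynomial $B$ but keeping track of the extra label information. The base case is when $T$ is a single vertex: then $P(T,\lambda)=\lambda(v)$ is a single variable from $\{x_1,x_2,\dots\}$, and $P(T',\lambda')$ must equal this same variable, which forces $T'$ to also be a single vertex with the same label (a non-trivial tree contributes a sum $\lambda'(\rho_{T'})+\prod_i P(T'_i,\lambda'_i)$, and one checks this is never a single variable — the leaf labels appearing in the product are disjoint from the root label by specialness).

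For the inductive step, write $T=\wedge(T_1,\dots,T_k)$ and $T'=\wedge(T'_1,\dots,T'_{k'})$, so that
\[
\lambda(\rho_T)+\prod_{i=1}^k P(T_i,\lambda_i)=\lambda'(\rho_{T'})+\prod_{j=1}^{k'} P(T'_j,\lambda'_j).
\]
The key structural observation I would establish first is that $P(T,\lambda)$ is a polynomial whose variables split into two groups: the ``leaf variables'' $x_1,x_2,\dots$ that appear only multiplicatively (every monomial of $P(T,\lambda)$ built up through the product rule contains a product of leaf variables, one per leaf below), and the ``internal variables'' $y,z$ and those in $S$. From the recursion one sees that $\prod_{i=1}^k P(T_i,\lambda_i)$ has the property that every monomial involves a positive power of some leaf variable, whereas the stray additive term $\lambda(\rho_T)$ is a single internal variable. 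Hence from the polynomial identity above I can read off that $\lambda(\rho_T)=\lambda'(\rho_{T'})$ (the unique additive monomial consisting of a lone internal variable, unless $k=1$ and $T_1$ is a single vertex, which needs a small separate check) and consequently $\prod_{i=1}^k P(T_i,\lambda_i)=\prod_{j=1}^{k'} P(T'_j,\lambda'_j)$.

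Now I need unique factorization: each $P(T_i,\lambda_i)$ should be irreducible in the polynomial ring $\mathbb{Z}[K]$, so that the two factorizations of the common product must agree up to reordering, giving a bijection $\sigma$ with $P(T_i,\lambda_i)=P(T'_{\sigma(i)},\lambda'_{\sigma(i)})$, whence $k=k'$ and the inductive hypothesis finishes the job. Irreducibility is where I expect the main obstacle: one must show $P(T_i,\lambda_i)$ cannot be written as a product of two non-constant polynomials. Following Liu, the argument is that $P(T_i,\lambda_i)=\lambda_i(\rho_{T_i})+(\text{product})$ has degree $1$ in the variable $\lambda_i(\rho_{T_i})$ — but one must be careful here because with vertex labels the root label might coincide with a label used elsewhere in $T_i$ (for instance $y$ could appear at many internal vertices). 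To handle this cleanly I would instead use the leaf variables: substitute generic values so that $P(T_i,\lambda_i)$, viewed appropriately, has content $1$ and a linear structure, or argue directly that if $P(T_i,\lambda_i)=fg$ then evaluating leaf variables shows one of $f,g$ must be a constant. This reduction to Liu's unlabelled case — essentially, specialize all $x_j$ to a single $x$ and all internal labels to track only $y$ versus the distinguished values — lets me inherit irreducibility from $\citep{liu2021tree}$ while the disjointness condition (i) together with the string condition (ii) guarantees no information is lost in recovering the labels. The bookkeeping to make ``no information lost'' precise — that from $P(T,\lambda)$ one recovers not just the tree shape but every vertex label, including correctly reconstructing strings labelled $(z,y,\dots,y)$ versus $(y,y,\dots)$ — is the part that requires the most care, and I would isolate it as the crux of the induction.
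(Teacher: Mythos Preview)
Your overall strategy matches the paper's: reduce irreducibility of $P(T,\lambda)$ to Liu's result by specializing all $x_i$ to $x$ and all internal labels to $y$, then use unique factorization and induction. However, there is a concrete gap in your inductive step.

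First, your claim that every monomial of $\prod_{i=1}^k P(T_i,\lambda_i)$ involves a leaf variable is false: if $T_i$ is not a single leaf then $P(T_i,\lambda_i)$ contains the lone internal-variable monomial $\lambda_i(\rho_{T_i})$, and the product of these over all $i$ is a monomial with no leaf variable at all. What is true, and what the paper actually uses, is the weaker statement that when $k\geq 2$ every monomial of the product has total degree at least $2$, so that $\lambda(\rho_T)$ is the unique degree-$1$ term.

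More seriously, you have the problematic special case backwards. When $k=1$ and $T_1$ is a single leaf, the polynomial is $\lambda(\rho_T)+x_j$ and the root label is perfectly readable as the unique internal-variable term. The real difficulty is $k=1$ with $T_1$ \emph{not} a leaf: then $P(T,\lambda)=\lambda(\rho_T)+\lambda_1(\rho_{T_1})+\cdots$ has several degree-$1$ internal-variable terms, and you cannot isolate $\lambda(\rho_T)$ among them. This recurs along the entire stem of $T$ (the initial chain of outdegree-$1$ vertices), and this is precisely where condition~(ii) on strings is essential, not merely as end-of-proof bookkeeping. The paper handles it by first reading off the full sum of degree-$1$ terms, which for a stem of length $l$ ending at a branching vertex $v$ is either $ly+\lambda(v)$ or $z+(l-1)y+\lambda(v)$; condition~(ii) guarantees this determines the labelled stem uniquely, after which one reduces to the case where both roots have outdegree at least $2$ and your argument goes through. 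Without this stem argument your induction fails whenever the root has outdegree $1$.
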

\begin{proof}
It is straight-forward to check that $P$ is an invariant for 
special pairs using a similar proof to \citep[Prop 2.2]{liu2021tree}.
To show that $P$ is also a complete invariant, we use a similar 
proof to \citep[Cor. 3.5]{liu2021tree}. 

Suppose $P(T,\lambda) = P(T',\lambda')$. We want to show
that $(T,\lambda)$ is isomorphic to $(T',\lambda')$.
Assume $\lambda(V(T)) = \{s_1,\dots,s_k\}$   $\leo{\subseteq \{x_1,x_2\dots\}\cup\{y,z\} \cup S}$, where $k \ge 2$. 
Since $P(T,\lambda) = P(T',\lambda')$, it follows that $\lambda(V(T')) = \{s_1,\dots,s_k\}$
(i.e., $P(T,\lambda)$ and $P(T',\lambda')$ are both  polynomials in the same variables).

Consider the polynomial $P'$ in $\mathbb Z[x,y]$ 
obtained from the polynomial $P(T,\lambda)$ in $\mathbb Z[s_1,\dots,s_k]$
by replacing each variable $x_i$ by $x$ and each variable \leo{in $S\cup\{z\}$} by $y$.
Then $P'$ is Liu's polynomial for $T$, i.e. $B(T)$, and so $P'$ is 
irreducible in $\mathbb Z[x,y]$ \citep[Lem. 2.6]{liu2021tree}.
It follows that $P(T,\lambda)$ is irreducible in $\mathbb Z[s_1,\dots,s_k]$.

We now use strong induction on $n$, the number of leaf vertices in $T$.

If $n=1$, then $T=(w_1,\dots,w_l)$, where $l\ge 1$,  is a directed path of length $l-1$.  
If $l >1$, then $(w_1,w_2,\dots,w_{l-1})$ is a string in $T$.
If $l=1$, then $P(T,\lambda)=x_i$,
and \leo{if $l\geq 2$}, then $P(T,\lambda)=\leo{(l-1)}y+x_i$ or $P(T,\lambda)=z+\leo{(l-2)}y+x_i$, for some $i \ge 1$. In either 
case it is straight-forward to check that $P(T,\lambda)=P(T',\lambda')$ implies
$(T,\lambda)$ is isomorphic to $(T',\lambda')$.

Now suppose that~$T$ has~$N>1$ leaves and that the result holds for trees with $n$ leaves where $1 \le n \le N-1$. 
First note that if $T$ has a stem $(w_1,\dots,w_l,v)$ with length greater than 0 (with
outdegree of $v$ at least 2), then  $(w_1,\dots,w_l)$ is a string with $l \ge 1$. 
It follows that the degree 1 terms in  $P(T,\lambda)$ 
are either $ly+\lambda(v)$ or $z +(l-1)y +\lambda(v)$, where $\lambda(v) \in \{y,z\}\cup S$.
Hence, as  $P(T,\lambda)=P(T',\lambda')$,
it follows that $T'$ has a stem of the same length as $T$ and that $\lambda$ 
and $\lambda'$ restricted to the stems are the same.

It follows that we can assume that $T$ and $T'$ both have trivial stems consisting of a root with outdegree at least 2,
and hence that $\lambda(\rho_T), \lambda'(\rho_{T'}) \in \{y,z\}\cup S$.
So $T  = \wedge (T_1,\dots,T_i)$ where $i >1$, and $T'  = \wedge (T'_1,\dots,T_j')$ where $j>1$. 
But then
\begin{align*}
P(T,\lambda) &= \lambda(\rho_T) + \prod_{k=1}^i P(T_k,\lambda_k)\\
&= \lambda'(\rho_{T'}) + \prod_{l=1}^j P(T'_l,\lambda'_l)=P(T',\lambda').
\end{align*}
\noindent Since $i,j > 1$, the products are polynomials in which all terms have degree 2 or higher, and 
so $\lambda(\rho_T)= \lambda'(\rho_{T'})$ as these are both degree 1 polynomials. 
Moreover, $(T_k, \lambda_k)$, $1 \le k \le i$, and $(T'_l, \lambda'_l)$, $1 \le j \le l$, are all special pairs
and so their polynomials must be irreducible.
It follows that $i=j$ and after reordering of  indices, 
$P(T_k, \lambda_k) = P(T'_k, \lambda'_k)$ 
for all \leo{$1 \le k \le i$}. 
Since the trees $T_i$ and $T'_i$ have fewer than 
\leo{$N$} leaves, it follows  by the induction hypothesis that $T_i$ and $T'_i$ must be isomorphic. Therefore $(T,\lambda)$ is isomorphic to $(T',\lambda')$.
\end{proof}

\section{A rooted cactus invariant based on unfolding}\label{sec:unfolding}

In this section we modify the technique used in \citep{pons2022polynomial}
to obtain a complete polynomial 
invariant for a special subclass of rooted cactuses. To define this polynomial
we first require some additional notation.
Recall that a rooted cactus is a directed acyclic graph
with a single root $\rho$ whose
underlying undirected graph is a cactus, and 
such that there is a directed path from the root to any vertex in 
the graph. Note that from this definition, it follows 
that~$\rho$ is the only vertex with indegree~0 and 
that all vertices of~$V$ have indegree at most 2. Note that cycles of the underlying 
cactus may overlap in a vertex.
We call a vertex \emph{elementary} if it has indegree~1 and outdegree~1.
A vertex with outdegree~0 is called a \emph{leaf} 
and a vertex with indegree~2 is called a \emph{reticulation}
(so we allow a vertex to be both a leaf and a reticulation).
A \emph{reticulation cycle} for a 
reticulation $v$ in a cactus is a pair of internally vertex-disjoint paths ending in 
$v$ and with the same origin vertex $w$; we 
call~$w$ the \emph{top vertex} of the reticulation cycle.

We now define a polynomial invariant for the class of rooted cactuses, which we 
will show to be complete for rooted cactuses without elementary vertices.
First, we associate \vince{a vertex-labelled pair} $(U_N,\lambda_N)$ to a rooted cactus~$N$
as follows (see Figure~\ref{fig:cactus} for an example):
\begin{itemize}
\item[(A)]{For every vertex $v$ in~$N$ that is the top vertex for $c\geq 1$ reticulation cycles, 
resolve the vertex $v$ as follows. Let $w_1,w_1',\ldots, w_c,w_c'$ be children of~$v$ 
such that each pair~$(w_i,w_i')$ is in the same reticulation cycle. For $1\leq i\leq c$, 
add a new vertex~$v_i$ with an arc $(v,v_i)$, replace the arcs $(v,w_i),(v,w_i')$ 
by arcs $(v_i,w_i),(v_i,w_i')$ and give~$v_i$ label $s$, i.e., set $\lambda_N(v_i)=s$.
Call the resulting network $N'$.}\label{item:A}
\item[(B)] Unfold the network $N'$ to get $U_N$ as follows. While there exists a 
lowest reticulation~$v$, make a copy of the tree rooted at~$v$ and 
replace one incoming arc~$(w,v)$ of~$v$ by an arc~$(w,v')$ with~$v'$ the root of the copy of~$v$. 
If~$v$ and~$v'$ are leaves, label them~$r$. Otherwise, label them~$z$.
\item[(C)]{Give all leaf vertices in $U_N$, that are not already labelled, label $x$.
Give all remaining unlabelled vertices in $U_N$ label $y$.}\label{item:C}
\end{itemize}

\begin{figure}
\centerline{\includegraphics[width=\columnwidth]{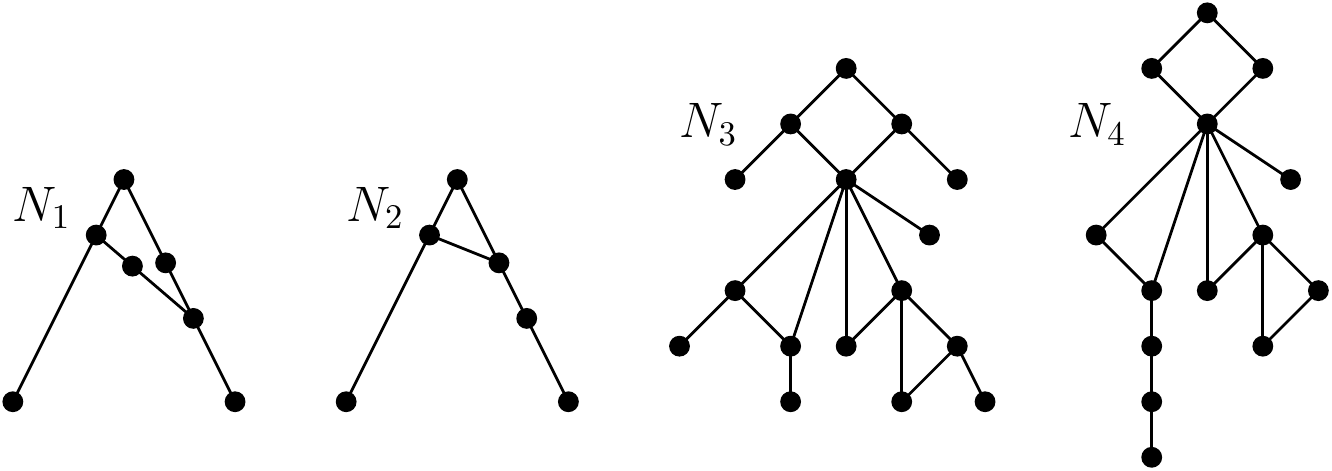}}
\caption{\rev{Example of four rooted cactuses. All arcs are directed downwards. The networks~$N_1$ and~$N_2$ have the same~$F$ polynomials~$F(N_1) = F(N_2) = y + s + \left[y+x(y+z+x)\right]\left[y+z+x\right]$, but different~$Q$ polynomials, since $Q(N_1) = y+s+[y+x(y+q)](z+x)(y+q)$ and $Q(N_2) = y+s+ (y+xq)(z+y+x)q$ (for example, $Q(N_1)$ has an~$xyzq$ term while~$Q(N_2)$ does not).}
\label{fig:cactus}}
\end{figure}

Observe that each vertex of~$U_N$ that corresponds to a reticulation of~$N$ is either an 
internal vertex labelled~$z$ or a leaf labelled~$r$. We now show that 
$(U_N,\lambda_N)$ is a special pair if~$N$ has no elementary vertices.

\begin{lemma}\label{lem:Fspecialpair}
Let~$N$ be a rooted cactus without elementary vertices. Then 
$(U_N,\lambda_N)$ is a special pair with $K \subseteq \{x_1=x,x_2=r\} \cup\{y,z\}\cup\{s\}$.
\end{lemma}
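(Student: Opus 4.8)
The plan is to verify conditions (i), (ii), and (iii) in the definition of a special pair directly for the pair $(U_N,\lambda_N)$ produced by steps (A)--(C), using the hypothesis that $N$ has no elementary vertices. First I would record the structural facts that are already apparent from the construction: step (A) turns $N$ into a directed acyclic graph $N'$ in which every reticulation has a unique parent that is an $s$-labelled vertex $v_i$ of outdegree~$2$; step (B) unfolds $N'$ into a rooted tree $U_N$, where each vertex of $U_N$ corresponding to a reticulation of $N$ becomes either an internal vertex labelled $z$ (if it had outdegree $\ge 1$ in $N$) or a leaf labelled $r$ (if it was a leaf of $N$); and step (C) assigns $x$ to every other leaf and $y$ to every other internal vertex. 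In particular the label set used is contained in $\{x,r\}\cup\{y,z\}\cup\{s\}$, matching the claim, once we identify $x_1=x$ and $x_2=r$.

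Next I would check (i): the leaves of $U_N$ are exactly the leaves of $N$ (in their various copies), since unfolding only duplicates subtrees rooted at reticulations and never creates new sinks, and resolving top vertices in (A) adds only internal vertices. Leaves of $N$ that are reticulations get label $r$ in (B), and all other leaves get label $x$ in (C); hence $\lambda_N(L(U_N))\subseteq\{x,r\}\subseteq\{x_1,x_2,\dots\}$. For (iii) I would argue that any vertex of $U_N$ that is neither a leaf nor in a string is an internal vertex of outdegree $\ge 2$, and such a vertex is labelled either $s$ (if it is one of the added $v_i$'s), or $z$ (if it is an internal reticulation-copy of outdegree $\ge 2$), or $y$ (otherwise) --- all of which lie in $\{y,z\}\cup S$.

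The main obstacle, and the place where the no-elementary-vertices hypothesis does real work, is condition (ii) concerning strings (maximal directed paths of outdegree-$1$ vertices). The key claim is that a string in $U_N$ can only arise at a reticulation-copy, i.e. the first vertex $w_1$ of a string is a $z$-labelled vertex (a reticulation of $N$ with outdegree exactly $1$), and every subsequent vertex $w_i$ with $i\ge 2$ is labelled $y$. I would prove this by ruling out the alternatives: an outdegree-$1$ vertex of $U_N$ comes from an outdegree-$1$ vertex of $N'$; in $N'$ the only outdegree-$1$ vertices are the images of outdegree-$1$ vertices of $N$, and since $N$ has no elementary vertices, every outdegree-$1$ vertex of $N$ must have indegree $0$ or indegree $2$ --- indegree $0$ gives the root, indegree $2$ gives a reticulation. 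One then checks that the root of $N$ does not become part of a string after unfolding (or handles it separately if $N$'s stem is nontrivial), while a reticulation of $N$ with outdegree $1$ is precisely a $z$-labelled vertex in $U_N$; and once we are inside a string, all later vertices are outdegree-$1$ non-reticulations, hence are given label $y$ in step (C) --- this is exactly what (ii) requires. I would also double-check that the newly added vertices $v_i$ in step (A) have outdegree $2$, so they are never in a string, and that a $z$-labelled vertex is never in the interior of a string (it can only be the head $w_1$), which follows because the parent of a reticulation-copy in $U_N$ has outdegree $\ge 2$ (it is either an $s$-vertex $v_i$ or, in the unfolded tree, a vertex that gained an extra child). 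Assembling (i)--(iii) then gives that $(U_N,\lambda_N)$ is a special pair with the stated variable set.
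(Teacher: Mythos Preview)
Your overall strategy --- verifying conditions (i), (ii), (iii) directly --- is correct and is essentially what the paper does (in one terse sentence). However, your treatment of condition (ii) contains two genuine errors.

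First, the claim that ``in $N'$ the only outdegree-$1$ vertices are the images of outdegree-$1$ vertices of $N$'' is false. Step~(A) can strictly decrease the outdegree of a top vertex: if a vertex $v$ is the top vertex of exactly one reticulation cycle and has no other children (so $v$ has outdegree~$2$ in~$N$), then after step~(A) the only child of $v$ is the new $s$-labelled vertex, and $v$ has outdegree~$1$ in~$N'$. Such a $v$ receives label~$y$ in step~(C). This case must be accounted for; fortunately it causes no trouble, since the unique child of such a $v$ in $N'$ is the $s$-vertex, which has outdegree~$2$, so $v$ can only sit at the \emph{end} of a string.

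Second, your justification that the parent of a $z$-labelled vertex has outdegree at least~$2$ is wrong: unfolding in step~(B) replaces an arc $(w,v)$ by $(w,v')$, so no vertex ``gains an extra child'' --- outdegrees are preserved by unfolding. The conclusion itself is correct, but the real argument uses the cactus hypothesis. If a reticulation~$v$ had a parent $p$ of outdegree~$1$ in~$N'$, then (ruling out the $s$-vertex and demoted-top-vertex cases, whose only child is an $s$-vertex) $p$ would be either the root or a reticulation of~$N$ with unique child~$v$. The root case forces a directed cycle; the reticulation case forces the reticulation cycle of~$v$ to contain an edge into~$p$, which already lies in the reticulation cycle of~$p$, contradicting edge-disjointness of cycles in a cactus.

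With these two points repaired, your argument goes through: every string begins with a vertex labelled $z$ (a reticulation copy) or $y$ (the root, or a top vertex demoted by step~(A)), and every subsequent vertex in the string is labelled~$y$. The paper's one-line proof in fact asserts the slightly too strong statement that every string is a single $z$-labelled vertex; the lemma is nonetheless true.
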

\begin{proof}
Each string consists of exactly one vertex, which is labelled~$z$. In addition, each leaf is labelled~$x$ or~$r$ and each internal vertex is labelled~$y,z$ or~$s$.
\end{proof}

Note that \Cref{lem:Fspecialpair} would not be true if we allowed~$N$ to contain elementary vertices, as then~$(U_N,\lambda_N)$ could contain a string with a sequence of labels~$(y,z,\ldots)$. \rev{See \Cref{fig:cactus} for two non-isomorphic networks that have the same~$F$ polynomials.}
We now define operations that undo the unfolding described above. The \emph{lowest common ancestor (LCA)} of two vertices~$u$ and~$v$ of a rooted cactus is the unique lowest vertex that has a path to both~$u$ and~$v$.
Given $(U_N,\lambda_N)$, for a rooted cactus~$N$ without elementary vertices, we construct a 
digraph $M(U_N,\lambda_N)$ from the rooted tree $U_N$ as follows.

\begin{itemize}
\item[(a)] While there exists a pair of vertices $a,b$ in $U_N$ 
that are either both internal vertices labelled~$z$ or both leaves 
labelled~$r$, choose such a pair with lowest
 LCA
$v_t$.
Fold-up the two rooted cactuses below $a$ and $b$ by adding an arc from 
the parent of~$b$ to~$a$ and then deleting~$b$ and all its descendants. Remove the label ($r$ or~$z$) from~$a$.
\item [(b)] Contract all arcs of the form $(v,u)$ where $u$ has label~$s$.
\end{itemize}
\rev{Observe that this digraph~$M(U_N,\lambda_N)$ is unlabelled as it does not come with a labelling map.}

\begin{lemma} \label{unfolding-reverse}
If $N$ is a rooted cactus without elementary vertices, then 
$N$ is isomorphic to $M(U_N,\lambda_N)$.
\end{lemma}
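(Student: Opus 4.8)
The plan is to show that the construction $M(U_N,\lambda_N)$ applied to the labelled unfolded tree recovers $N$ exactly, by arguing that step (a) reverses step (B) of the unfolding and step (b) reverses step (A) of the vertex-resolution. I would first record the key structural facts about $(U_N,\lambda_N)$ established by the construction: the vertices of $U_N$ labelled $z$ or $r$ are exactly the copies of reticulations of $N'$ (equivalently of $N$), they come in matched pairs $\{v,v'\}$ where $v'$ roots the copied subtree, and the two subtrees rooted at such a matched pair are isomorphic as labelled rooted trees. Similarly, the vertices labelled $s$ in $U_N$ are exactly the (copies of the) resolution vertices $v_i$ added in step (A), each of indegree $1$ and outdegree $1$ in $N'$, hence in $U_N$ each $s$-vertex sits on a string of the form — well, each $s$-vertex is an internal vertex with a unique parent and, since we contract $(v,v_i)$, we must check that contracting all such arcs in $U_N$ before or after the fold-up is well-defined and yields $N$'s vertex set.

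The main steps in order are as follows. First I would prove that step (a) terminates and that after it completes there are no remaining $z$- or $r$-labelled vertices; this follows because each iteration strictly decreases the number of such vertices, and because the matched-pair structure guarantees that whenever such vertices exist at least two of them exist. Second, I would prove by induction on the number of reticulations (equivalently, on the number of iterations of step (B)) that choosing the pair with lowest LCA in step (a) correctly identifies a matched pair $\{v,v'\}$: the lowest such pair cannot straddle two different reticulation-copy events, because the subtree below the more recently unfolded (lowest) reticulation of $N'$ contains no $z/r$ vertices other than that matched pair itself, so its LCA is strictly below the LCA of any other candidate pair. Hence the fold-up glues the correct copy back, re-creating the indegree-$2$ vertex and the reticulation cycle. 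Third, once all reticulations have been folded back we are left (up to the unresolved $s$-vertices) with $N'$; then step (b) contracts exactly the arcs $(v,v_i)$ introduced in step (A), and since each $v_i$ has outdegree $2$ in $N'$, contracting $(v,v_i)$ merges $w_i,w_i'$ back to being children of $v$, recovering $N$. Throughout I would use \Cref{lem:Fspecialpair} to know that $(U_N,\lambda_N)$ really is a special pair so that the labels $z,r,s$ are unambiguous markers (no $z$ on a leaf, no $r$ on an internal vertex, strings are single $z$-vertices), which is where the no-elementary-vertices hypothesis is used.

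The hard part will be the bookkeeping in the inductive step: I must argue carefully that the "lowest LCA" tie-breaking in step (a) always selects a genuinely matched pair rather than two unrelated copies of distinct reticulations, and that after each fold-up the resulting labelled digraph is again of the form $(U_{N^-},\lambda_{N^-})$ for the rooted cactus $N^-$ obtained from $N$ by un-doing one unfolding step — so that the induction hypothesis applies. This requires showing that the fold-up operation on $(U_N,\lambda_N)$ commutes with removing the corresponding reticulation from $N$ before unfolding. A clean way to do this is to set up the unfolding in step (B) as a finite sequence $N' = N_0', N_1', \dots, N_m' $ with $N_m' = U_N$, each $N_{t+1}'$ obtained from $N_t'$ by one "copy the lowest reticulation" move, prove that step (a) applied once to $N_{t+1}'$ (with the right choice of pair) yields $N_t'$, and conclude by induction that iterating step (a) yields $N_0' = N'$; then one final appeal to the definition of step (A) plus step (b) finishes the proof. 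The remaining verifications — that the added arc goes from the parent of $b$ to $a$ in the right direction, that deleting $b$ and its descendants removes exactly the copied subtree, and that no spurious multi-edges or cycles are created — are routine given the matched-subtree isomorphism.
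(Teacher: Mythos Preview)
Your overall strategy --- show that (a) reverses (B) and (b) reverses (A) --- matches the paper exactly. The gap is in how you propose to show that (a) reverses (B).

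Your inductive scheme sets up the unfolding as a sequence $N'=N_0',\dots,N_m'=U_N$ and asks to show that one application of (a) to $N_{t+1}'$ yields $N_t'$. This does not work: the pair with lowest LCA in $U_N$ is \emph{not} in general the matched pair produced by the last unfolding step. Concretely, if $N'$ has a reticulation $r_1$ whose cycle sits strictly below the cycle of a reticulation $r_2$, then (B) first unfolds $r_1$ (creating a matched pair with LCA the top of $r_1$'s cycle), and then unfolds $r_2$, which \emph{duplicates} that already-created matched pair. In $U_N$ the lowest-LCA pairs are the two copies of the $r_1$-pair, not the $r_2$-pair, so (a) folds an $r_1$-copy first --- and the result is not $N_1'$. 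Your sentence ``the subtree below the more recently unfolded (lowest) reticulation contains no $z/r$ vertices other than that matched pair itself'' is therefore false: by the time a higher reticulation is unfolded, its subtree typically contains $z/r$-labelled vertices from earlier unfoldings.

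The paper avoids this by not trying to reverse the sequence step-by-step. Instead it introduces \emph{good triples} $(a,b,v_t)$: the pairs created by (B), together with all their copies made by later unfoldings. The crucial structural fact is that after step (A) all reticulation cycles in $N'$ are vertex-disjoint, so for any two distinct good triples the paths from top vertex to endpoints are pairwise disjoint. From this one argues by contradiction that the lowest-LCA pair selected in (a) must be a good triple (if $a$ and $b$ came from different good triples, disjointness forces one of those triples to have strictly lower LCA). Each iteration of (a) then undoes one copy of an unfolding move, without any need for the orders to match. Your proposal never isolates this disjointness property, which is really the engine of the argument.
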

\begin{proof}

We will show that (a) reverses (B). Since it is clear that (b) reverses (A),
and since the vertices of $N$ and $M(U_N,\lambda_N)$ are unlabelled,
it will follow that $M(U_N,\lambda_N)$ is isomorphic to~$N$.

We now show that (a) reverses (B). Each iteration of (B), acting 
on a reticulation~$v$ of a reticulation cycle with top vertex~$v_t$, 
creates a pair~$a,b$ of vertices that are either both internal vertices 
labelled~$z$ or both leaves labelled~$r$. In either case, their lowest common ancestor is~$v_t$. 
Call such a triple~$(a,b,v_t)$ a \emph{good triple}. Future 
iterations may make copies of such good triples, which will also all be good. 
We will show that (a) only selects good triples.

To do so, we first observe that, for any pair of 
good triplets $(a,b,v_t),(a',b',v_t')$ (with $\{a,b\}\neq\{a',b'\}$), the 
paths from~$v_t$ to~$a$ and~$b$ are vertex disjoint from the 
paths from~$v_t'$ to~$a'$ and~$b'$. This follows from the observation that, after step (A), 
all reticulation cycles are vertex disjoint.
We now show that (a) only selects good triples. Suppose (a) selects a triple $(a,b,v_t)$ that is not good. 
Since~$a$ is labelled~$z$ or~$r$, it is also in a good triple $(a,c,w_t)$. 
Note that~$w_t$ is not strictly below~$v_t$ because then (a) would not select $(a,b,v_t)$. 
So either $w_t=v_t$ or $w_t$ is above~$v_t$.

Now note that also~$b$ is in a good triple $(b,d,u_t)$. Moreover,~$u_t$ is below~$v_t$, 
and $u_t\neq v_t$, because otherwise the paths from~$u_t$ to~$b$ and~$d$ would not be 
vertex disjoint from the paths from~$w_t$ to~$a$ and~$c$.
This gives a contradiction because in this case (a) would not select $(a,b,v_t)$.

We conclude that (a) only selects good triples. Hence, each iteration of (a) 
undoes an iteration of (B) on one copy of the created good triple. It follows that (a) reverses (B).
\end{proof}

Now define a polynomial $F$ 
for a rooted cactus $N$ 
by setting

$$
F(N)=P(U_N,\lambda_N).
$$ 

\noindent Note that if $N \in \mathcal T$ is a rooted tree, 
then $F(N)=B(T)$, i.e., Liu's polynomial for $T$.
Moreover, if $N$ is \emph{binary}, i.e., if all vertices have outdegree at most~$2$ 
and all reticulations have outdegree~1, then 
$F$ is a polynomial in the four variables $x,y,z$ and~$s$, and 
otherwise in the five variables $x,y,z,r$ and~$s$.

We now show that~$F(N)$ is a complete invariant for 
the class of rooted cactuses without elementary vertices.

\begin{theorem}\label{thm:cactus-invariant}
	If $N$ and $N'$ are rooted cactuses without elementary vertices, then
	$F(N)=F(N')$ if and only if $N$ is isomorphic to $N'$.
\end{theorem}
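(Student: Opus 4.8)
The plan is to assemble the theorem from the three pieces already in place: the forward construction $N\mapsto(U_N,\lambda_N)$ of steps (A)--(C), the inverse construction $M(\cdot)$ of steps (a)--(b), and the fact from \Cref{variables} that $P$ is a complete invariant for special pairs. First I would prove the ``only if'' direction. The construction of $(U_N,\lambda_N)$ involves arbitrary choices (the order in which top vertices and lowest reticulations are processed, which of the two incoming arcs of a reticulation is redirected to the new copy, and the matching of the children of a copy to the children of the original). I would argue that none of these affects the isomorphism type of the resulting special pair: redirecting one incoming arc of a reticulation $v$ rather than the other merely interchanges the roles of the original subtree below $v$ and its copy, which are identical by construction; and distinct processing steps act on vertex-disjoint parts of the network, because after step (A) all reticulation cycles are vertex-disjoint (this is the observation already used in the proof of \Cref{unfolding-reverse}), so the steps commute up to isomorphism. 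Hence the isomorphism class of $(U_N,\lambda_N)$ depends only on that of $N$; concretely, given an isomorphism $\psi\colon N\to N'$ one can run the construction on $N'$ using the $\psi$-images of all choices made for $N$, producing $(U_{N'},\lambda_{N'})$ isomorphic to $(U_N,\lambda_N)$ as special pairs. The invariance half of \Cref{variables} then gives $F(N)=P(U_N,\lambda_N)=P(U_{N'},\lambda_{N'})=F(N')$.

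For the ``if'' direction I would start from $F(N)=F(N')$, i.e.\ $P(U_N,\lambda_N)=P(U_{N'},\lambda_{N'})$. By \Cref{lem:Fspecialpair} both pairs are special, so the completeness half of \Cref{variables} yields an isomorphism of special pairs $(U_N,\lambda_N)\cong(U_{N'},\lambda_{N'})$. Next I would check that $M$ sends isomorphic special pairs to isomorphic digraphs: step (b) (contracting every in-arc of an $s$-labelled vertex) is manifestly canonical, and for step (a) the proof of \Cref{unfolding-reverse} shows that the only pairs ever selected are members of ``good triples'', whose two vertices and their LCA are intrinsic to the labelled tree and whose associated paths are pairwise vertex-disjoint, so the fold-ups commute up to isomorphism no matter in which order they are performed. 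Therefore $M(U_N,\lambda_N)\cong M(U_{N'},\lambda_{N'})$. Finally \Cref{unfolding-reverse} gives $N\cong M(U_N,\lambda_N)$ and $N'\cong M(U_{N'},\lambda_{N'})$, so $N\cong N'$, completing the proof.

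I expect the main obstacle to be making rigorous the repeated appeal to ``the choices do not matter'': precisely, showing that the unfolding in step (B) is confluent up to isomorphism, that the fold-ups in step (a) are likewise confluent, and that a network isomorphism $\psi\colon N\to N'$ genuinely extends, step by step, to an isomorphism of the constructed special pairs. Each of these reduces to the vertex-disjointness of reticulation cycles after step (A) together with the symmetry between a copied subtree and its original, but these facts must be stated and used carefully so that the reversibility argument already established in \Cref{unfolding-reverse} can be leveraged in both directions without circularity.
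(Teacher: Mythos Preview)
Your proposal is correct and follows essentially the same approach as the paper: both directions are assembled from \Cref{lem:Fspecialpair}, \Cref{variables}, and \Cref{unfolding-reverse} in exactly the way you describe. The paper's proof is terser, asserting without elaboration that $N\cong N'$ implies $(U_N,\lambda_N)\cong(U_{N'},\lambda_{N'})$ and that $M$ respects isomorphism, whereas you spell out the confluence reasoning behind these steps; your added care is warranted but does not change the underlying argument.
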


\begin{proof}
It is straight-forward to check that if $N$ is isomorphic to $N'$, 
then $(U_N,\lambda_N)$ is isomorphic to $(U_{N'},\lambda_{N'})$. So
by Theorem~\ref{variables} $F(N)=F(N')$.

Conversely, suppose $F(N)=F(N')$.
By Lemma~\ref{lem:Fspecialpair},~$(U_N,\lambda_N)$ and~$(U_{N'},\lambda_{N'})$ are both special pairs.
Therefore by Theorem~\ref{variables},
$(U_N,\lambda_N)$ is isomorphic to $(U_{N'},\lambda_{N'})$.
So $M(U_N,\lambda_N)$ is isomorphic to $M(U_{N'},\lambda_{N'})$.
By Lemma~\ref{unfolding-reverse} it now follows that $N$ is isomorphic to $N'$.
\end{proof}

\begin{remark}
Observe that the polynomial~$F(N)$ can be obtained directly from~$N$ by defining 
a polynomial~$F(N,v)$ for each vertex~$v$ of~$N=(V,A)$ as follows, and letting~$F(N)=F(N,\rho)$.
\begin{itemize}
	\item If~$v$ is a leaf, then
	\[
	F(N,v) = \begin{cases} r &\quad \mbox{if~$v$ is a reticulation,}\\
	x &\quad \mbox{otherwise.}
	\end{cases}
	\]
	\item Otherwise, $v$ is the top vertex of~$c\geq 0$ reticulation cycles, with $w_1,w_1',\ldots, w_c,w_c'$ children of~$v$ such that each pair~$(w_i,w_i')$ is in the same reticulation cycle, and~$w_{c+1},\ldots ,w_{d-c}$ the other children of~$v$ (with~$d$ the outdegree of~$v$). Let~$G(N,v) = \prod_{i=1}^{c} \left[ s + F(N,w_i)F(N,w_i') \right] \prod_{i=c+1}^{d-c} F(N,w_i)$. Then,
	\[
	F(N,v)= \begin{cases} z + G(N,v) &\quad\mbox{ if~$v$ is a reticulation}\\
	 y + G(N,v)&\quad\mbox{ otherwise.}
	 \end{cases}
	\]
\end{itemize}
\end{remark}

\section{A rooted cactus invariant based on expanding}\label{sec:expanding}

In the last section, we defined a polynomial $F$ that is a complete invariant for the special class of
rooted cactuses without elementary vertices. Although this polynomial has the advantage
of being in at most 5 variables, it is not a complete invariant for all rooted cactuses, 
and~$F$ could have degree that is exponentially dependent on the number of reticulations
(observe that copies of networks rooted at each reticulation are 
created when obtaining the special pair).
In this section, we introduce a polynomial invariant $Q$ for the class of rooted cactuses, 
which has the advantages of not needing to exclude elementary vertices
and having a linearly bounded degree.

To define $Q$ and show that it is a complete 
invariant for cactuses, we use a similar approach to the one
used for $F$.
Suppose that $N=(V,E)$ is a rooted cactus. We begin by associating a 
vertex-labelled tree $(T_N,\mu_N)$ to $N$ as follows (see Figure~\ref{fig:poly} for an example):
\begin{itemize}
\item[(A)] As in page \pageref{item:A}.
\item[(B')] Expand the network~$N'$ to get $T_N$ as follows. While there exists a reticulation, 
take a lowest reticulation cycle in $N'$, with top vertex $v_t$ and with reticulation $v$ having parents $u,u'$.
Remove arcs $(u,v)$, $(u',v)$. Add vertices $p,p'$ and arcs $(u,p)$, $(u',p')$. Add
arc $(v_t,v)$. Give both of the vertices $p,p'$ (which are leaves) label~$q$.
Give $v$ the label $z$ if it is an internal vertex and $r$ if it is a leaf.
\item[(C)] As in page \pageref{item:C}.
\end{itemize}

\begin{figure*}
\centerline{\includegraphics[scale = 0.7]{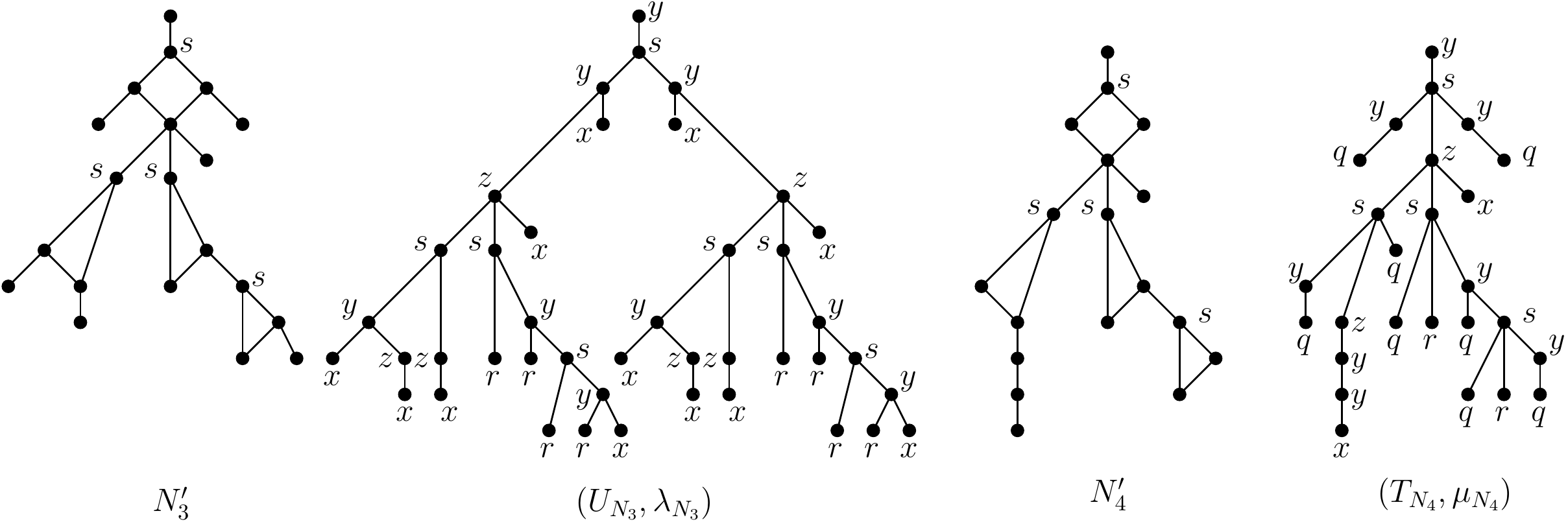}}
\caption{\rev{Example of how the special pairs $(U_{N_3},\lambda_{N_3})$ and~$(T_{N_4},\mu_{N_4})$ are obtained from 
 rooted cactuses~$N_3$ and~$N_4$ of \Cref{fig:cactus}, respectively. 
All arcs are directed downwards.}\label{fig:poly}}
\end{figure*}

The following lemma is straight-forward to prove.

\begin{lemma}\label{lem:Qspecialpair}
$(T_N,\mu_N)$ is a special pair with $K \subseteq \{x_1=x,x_2=r,x_3=q\} \cup\{y,z\}\cup\{s\}$. 
\end{lemma}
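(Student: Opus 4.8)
The plan is to verify directly that the pair $(T_N, \mu_N)$ satisfies the three defining conditions (i)--(iii) of a special pair, by tracing through what each vertex of $T_N$ looks like after steps (A), (B'), and (C). First I would observe that $T_N$ is indeed a rooted tree: step (A) only splits a top vertex into elementary-style vertices (introducing arcs $(v,v_i)$) and step (B') removes each reticulation by redirecting its two incoming arcs to fresh leaves $p,p'$ and adding a single arc $(v_t,v)$, so each iteration strictly decreases the number of reticulations and leaves the graph a rooted DAG with no vertex of indegree $2$; hence $T_N \in \mathcal{T}$, and $\mu_N$ maps $V(T_N)$ into $K \subseteq \{x_1=x, x_2=r, x_3=q\}\cup\{y,z\}\cup\{s\}$.

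Next I would check condition (i): the leaves of $T_N$ are exactly the original leaves of $N$ that are not reticulations (labelled $x$ by step (C)), the leaves of $N$ that are reticulations (labelled $r$ by step (B')), and the newly added vertices $p,p'$ (labelled $q$ by step (B')). All of these labels lie in $\{x_1,x_2,x_3\} = \{x,r,q\}$, so $\lambda(L(T_N)) \subseteq \{x_1,x_2,\dots\}$ as required. For condition (iii), every internal vertex of $T_N$ that is not on a string receives label $y$ (step (C)) unless it is one of the split vertices $v_i$ from step (A), which get label $s$, or an internal reticulation-image, which gets label $z$ in step (B'); all of these lie in $\{y,z\}\cup S$, so (iii) holds.

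The one point needing a little care is condition (ii), concerning strings, since this is exactly where the $F$-polynomial construction failed when elementary vertices were present. Here I would argue that in $T_N$ every maximal directed path of outdegree-$1$ vertices (a string) either consists of a single vertex, or, if longer, is the image of an elementary path already present in $N$ together with possibly its top endpoint being an internal reticulation-image labelled $z$. Concretely: an outdegree-$1$ vertex of $T_N$ is either an elementary vertex inherited from $N$ (hence labelled $y$), or a split vertex $v_i$ — but $v_i$ has outdegree $2$ by construction so it is never on a string — or an internal reticulation-image $v$ which, after (B'), has its two incoming arcs replaced by the single arc $(v_t,v)$ and retains all its outgoing arcs; such a $v$ has indegree $1$, and it is the \emph{unique} vertex of a string that can carry label $z$, appearing only as the topmost vertex $w_1$. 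So any string reads $(z,y,y,\dots)$ or $(y,y,\dots)$, which is precisely what (ii) demands; I would double-check that two consecutive reticulation-images cannot both be internal and adjacent on a string, which follows because after step (B') the arc into an internal reticulation-image comes from its reticulation-cycle top vertex $v_t$, and $v_t$ has outdegree at least $2$ (it is the origin of two internally disjoint paths), so $v_t$ is not itself on a string.

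The main obstacle, then, is purely bookkeeping: making sure the case analysis of "what can an outdegree-$1$ vertex of $T_N$ be" is exhaustive and that the $z$-labelled vertices land only at the top of strings. Once that is pinned down, conditions (i)--(iii) are immediate from the labelling prescriptions in (A), (B'), (C), and the lemma follows. No deeper machinery (and in particular no appeal to Theorem~\ref{variables}) is needed.
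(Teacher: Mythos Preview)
Your proposal is correct and is exactly the kind of direct verification the paper has in mind; the paper itself gives no proof beyond the phrase ``straight-forward to prove'', so your case analysis of the labels together with the key observation that the parent of every $z$-labelled vertex is an $s$-labelled vertex of outdegree~$\geq 2$ (hence any $z$ on a string must be $w_1$) supplies precisely the omitted details. One small correction: after step~(B') each split vertex $v_i$ acquires the reticulation $v$ as an additional child, so its outdegree is~$3$, not~$2$ --- but this only strengthens your conclusion that $v_i$ never lies on a string.
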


Now, given $(T_N,\mu_N)$, we construct a 
digraph $M(T_N,\mu_N)$ from the rooted tree $T_N$ as follows:
\begin{itemize}
\item[(a')] While there exists a pair of leaves $p,p'$ labelled~$q$ in $T_N$, 
choose such a pair with lowest LCA $v_t$. Let~$u,u'$ be the (necessarily unique) 
parents of~$p$ and~$p'$, respectively. Let~$v$ be the (necessarily unique) 
child of~$v_t$ labelled~$z$ or~$r$. Remove leaves~$p,p'$ and arc~$(v_t,v)$. Add arcs~$(u,v),(u',v)$.
\item [(b)] Contract all arcs of the form $(v,u)$ where $u$ 
has label~$s$.
\end{itemize}

\begin{lemma} \label{reverse}
If $N$ is a rooted cactus, then $N$ is isomorphic to $M(T_N,\mu_N)$.
\end{lemma}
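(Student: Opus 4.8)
The plan is to mirror the structure of the proof of Lemma~\ref{unfolding-reverse}: show that operation (a') reverses the expansion step (B'), and that (b) reverses (A) exactly as before, and then conclude from the fact that both $N$ and $M(T_N,\mu_N)$ are unlabelled digraphs. The key conceptual point, just as in the unfolding case, is that after step (A) all reticulation cycles of $N'$ are vertex-disjoint, so the gadgets introduced by successive iterations of (B') do not interfere with one another.

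First I would set up the analogue of a ``good triple''. Each iteration of (B') acts on a lowest reticulation cycle with top vertex $v_t$ and reticulation $v$, creating two new leaves $p,p'$ labelled $q$ whose parents $u,u'$ were the former parents of $v$, and an arc $(v_t,v)$ where $v$ is now the unique child of $v_t$ carrying a label in $\{z,r\}$. Call such a configuration $(p,p',v_t,v,u,u')$ a \emph{good gadget}; note that the LCA of $p$ and $p'$ is $v_t$ and that $v$ (the child of $v_t$ labelled $z$ or $r$) is uniquely determined. Since reticulation cycles are vertex-disjoint after (A), and since (B') does not create new branching structure except these gadgets, any two good gadgets use vertex-disjoint path segments between their respective top vertices and their leaves/reticulations. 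I would then argue, exactly as in Lemma~\ref{unfolding-reverse}, that operation (a') can only ever select the pair $p,p'$ of some good gadget: if it selected a pair $p,p'$ coming from two different gadgets with LCA $v_t$, then $p$ would lie in a good gadget with top vertex $w_t$ (not strictly below $v_t$, else (a') would prefer that one) and $p'$ in a good gadget with top vertex $u_t$ strictly below $v_t$, and the vertex-disjointness of these two gadgets together with the requirement that $v_t$ be a common ancestor of $p$ and $p'$ would force a vertex in common, a contradiction. Hence (a') always picks a genuine gadget, and by removing $p,p'$ and the arc $(v_t,v)$ and re-adding $(u,v),(u',v)$ it restores exactly the reticulation cycle that (B') destroyed. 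A short induction on the number of reticulations then shows $M$ applied after all the (a') steps recovers $N'$.

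Next I would note that step (b) --- contracting every arc into an $s$-labelled vertex --- is the literal inverse of step (A), which introduced exactly those $s$-labelled vertices, one per reticulation cycle at each top vertex; this is immediate and needs only the remark that the $s$-labels are still present after the (a') phase because (a') only touches $q$-, $z$- and $r$-labelled vertices and their incident arcs. Finally, since the vertices of $N$ and of $M(T_N,\mu_N)$ carry no labels, the isomorphism of digraphs follows from the fact that the composite of (a') (iterated to completion) and (b) sends $(T_N,\mu_N)$ back to $N$ up to isomorphism.

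I expect the main obstacle to be the same one as in Lemma~\ref{unfolding-reverse}: carefully proving that (a') never selects a ``bad'' pair of $q$-leaves coming from two distinct gadgets. The subtlety is that $T_N$ is a tree with potentially many $q$-leaves, and a priori an LCA-minimal pair of $q$-leaves need not correspond to a single gadget; ruling this out requires the vertex-disjointness of reticulation cycles post-(A) together with a minimality argument on the LCA. One extra wrinkle relative to the unfolding proof is that here (B') also re-attaches $v$ directly under $v_t$, so I must check that after (B') the vertex $v$ is indeed the \emph{unique} child of $v_t$ with a label in $\{z,r\}$ (or that, if $v_t$ is the top vertex of several reticulation cycles, the (A) step has already split these off via $s$-vertices so that each $v_t$ handles exactly one cycle), which guarantees that ``the child of $v_t$ labelled $z$ or $r$'' in step (a') is well-defined. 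Once these points are pinned down, the rest is routine bookkeeping.
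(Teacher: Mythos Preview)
Your proposal is correct and follows exactly the approach the paper intends: the paper's own proof is a single sentence stating that (a') reverses (B'), (b) reverses (A), and both digraphs are unlabelled, implicitly deferring the details to the analogous argument in Lemma~\ref{unfolding-reverse}. Your write-up supplies precisely those details (vertex-disjointness of gadgets after (A), the minimality argument showing (a') only selects good pairs, and the well-definedness of the unique $z/r$-labelled child of $v_t$), so it is a fleshed-out version of the paper's proof rather than a different route.
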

\begin{proof}
The proof follows from observing that (a') reverses (B), that (b) reverses (A) and that the vertices of both~$N$ and $M(T_N,\mu_N)$ are unlabelled.
\end{proof}

Now define a polynomial $Q$ 
for a rooted cactus $N$ by setting
\begin{equation}\label{Qequation}
Q(N)=P(T_N,\mu_N).
\end{equation}
Note that if $N \in \mathcal T$ is a rooted tree, then 
$Q(N)=F(N)=B(T)$. Moreover, if $N$ is binary, then $Q$ is a polynomial 
in the five variables $x,y,z,q,s$ and otherwise in the six variables $x,y,z,q,r$ and~$s$.

We now show that~$Q(N)$ is a complete invariant for rooted cactuses.

\begin{theorem}\label{thm:cactus-invariant2}
	If $N$ and $N'$ are rooted cactuses, then
	$Q(N)=Q(N')$ if and only if $N$ is isomorphic to $N'$.
\end{theorem}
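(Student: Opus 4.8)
The plan is to mirror exactly the structure of the proof of Theorem~\ref{thm:cactus-invariant}, replacing the unfolding construction with the expanding construction and invoking the corresponding lemmas. First I would establish the easy direction: if $N$ is isomorphic to $N'$, then tracing through steps (A), (B') and (C) of the expanding construction, the vertex-labelled tree $(T_N,\mu_N)$ is isomorphic to $(T_{N'},\mu_{N'})$ as special pairs. This is because each step of the construction is canonical — step (A) resolves top vertices in a way determined by the reticulation-cycle structure, step (B') repeatedly picks a lowest reticulation cycle (the choice among several lowest ones being immaterial since they are vertex-disjoint after (A), a point that should be noted), and step (C) labels the remaining leaves and internal vertices deterministically. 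Hence an isomorphism $N\to N'$ induces one $T_N\to T_{N'}$ respecting labels. Then by Lemma~\ref{lem:Qspecialpair} both pairs are special, and by Theorem~\ref{variables} we get $P(T_N,\mu_N)=P(T_{N'},\mu_{N'})$, i.e. $Q(N)=Q(N')$.

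For the converse, suppose $Q(N)=Q(N')$. By Lemma~\ref{lem:Qspecialpair}, $(T_N,\mu_N)$ and $(T_{N'},\mu_{N'})$ are special pairs, so Theorem~\ref{variables} applies and gives that $(T_N,\mu_N)$ is isomorphic to $(T_{N'},\mu_{N'})$ as special pairs. Applying the reconstruction map $M(\cdot)$ — which is a purely combinatorial procedure defined on special pairs via steps (a') and (b) — to isomorphic special pairs yields isomorphic digraphs, so $M(T_N,\mu_N)$ is isomorphic to $M(T_{N'},\mu_{N'})$. Finally, by Lemma~\ref{reverse}, $M(T_N,\mu_N)\cong N$ and $M(T_{N'},\mu_{N'})\cong N'$, so $N\cong N'$. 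The argument is essentially a four-line chain: $Q(N)=Q(N')$ $\Rightarrow$ $(T_N,\mu_N)\cong(T_{N'},\mu_{N'})$ $\Rightarrow$ $M(T_N,\mu_N)\cong M(T_{N'},\mu_{N'})$ $\Rightarrow$ $N\cong N'$.

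The only real content here has already been offloaded to the lemmas and to Theorem~\ref{variables}; the main obstacle, such as it is, is making sure that the map $M$ is genuinely well-defined on isomorphism classes of special pairs (independent of the order in which pairs of $q$-leaves are processed in (a')) and that applying it commutes with label-preserving isomorphisms of special pairs. In the $F$ case this is handled implicitly by the "good triple" analysis in Lemma~\ref{unfolding-reverse}; for $Q$, the analogous observation is that Lemma~\ref{reverse} already guarantees $M(T_N,\mu_N)\cong N$ for the specific pair arising from a cactus, and since Theorem~\ref{variables} tells us every special pair isomorphic to $(T_N,\mu_N)$ is literally the same labelled tree up to isomorphism, there is nothing further to check — one just notes that isomorphic inputs to a deterministic (up-to-isomorphism) procedure produce isomorphic outputs. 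I would therefore keep the proof short, essentially writing: "This follows from Lemmas~\ref{lem:Qspecialpair} and~\ref{reverse} together with Theorem~\ref{variables}, by the same argument as in the proof of Theorem~\ref{thm:cactus-invariant}."

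\begin{proof}
It is straightforward to check that if $N$ is isomorphic to $N'$,
then $(T_N,\mu_N)$ is isomorphic to $(T_{N'},\mu_{N'})$, since steps (A), (B') and (C)
are carried out canonically (in (B'), distinct lowest reticulation cycles are vertex-disjoint after step (A), so the order of processing is immaterial). Hence by Theorem~\ref{variables}, $Q(N)=Q(N')$.

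Conversely, suppose $Q(N)=Q(N')$. By Lemma~\ref{lem:Qspecialpair}, $(T_N,\mu_N)$ and $(T_{N'},\mu_{N'})$ are both special pairs.
Therefore by Theorem~\ref{variables}, $(T_N,\mu_N)$ is isomorphic to $(T_{N'},\mu_{N'})$.
So $M(T_N,\mu_N)$ is isomorphic to $M(T_{N'},\mu_{N'})$.
By Lemma~\ref{reverse} it now follows that $N$ is isomorphic to $N'$.
\end{proof}
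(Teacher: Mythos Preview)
Your proposal is correct and matches the paper's own proof essentially verbatim: the paper simply states that the argument is analogous to that of Theorem~\ref{thm:cactus-invariant}, replacing Lemmas~\ref{lem:Fspecialpair} and~\ref{unfolding-reverse} by Lemmas~\ref{lem:Qspecialpair} and~\ref{reverse}. Your written-out proof is exactly the expanded form of that analogy, so there is nothing to add or correct.
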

\begin{proof}

This is analogous to the proof of \Cref{thm:cactus-invariant}, by replacing~\Cref{unfolding-reverse,lem:Fspecialpair} with \Cref{reverse,lem:Qspecialpair}.
\end{proof}

Note that, in contrast to the polynomial $F$, $Q$ less directly
reflects the structure of a cactus. Indeed, 
with the construction of $Q$, we add two additional leaves for every reticulation.
On the other hand, with the construction used for~$F$, every 
leaf of the unfolded tree corresponds to a leaf of the original network.
However, as we shall now show, unlike $F$,
the degree of $Q$  can linearly 
bounded in terms of the number of leaves and reticulations.

\begin{theorem}\label{lem:QDegree}
Let~$N$ be a rooted cactus with~$n$ leaves and~$k$ reticulations. 
Then~$Q(N)$ is a polynomial 
of degree~$n+2k$.
\end{theorem}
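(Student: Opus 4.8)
The plan is to split the claim into an algebraic fact about $P$ and a combinatorial fact about the expansion construction, and then combine them through $Q(N)=P(T_N,\mu_N)$. The algebraic fact is that $\deg P(T,\lambda)=|L(T)|$ for every vertex-labelled rooted tree $(T,\lambda)$; the combinatorial fact is that the tree $T_N$ built from a rooted cactus $N$ with $n$ leaves and $k$ reticulations has exactly $n+2k$ leaves.

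First I would prove, by structural induction on $T$, that $\deg P(T,\lambda)=|L(T)|$, carrying along the auxiliary claim that $P(T,\lambda)$ has non-negative integer coefficients (this is what rules out cancellation). For the single-vertex tree, $P(T,\lambda)=\lambda(v)$ has degree $1=|L(T)|$ and a unit coefficient. For $T=\wedge(T_1,\dots,T_j)$ with $j\ge 1$ we have $|L(T)|=\sum_{i=1}^{j}|L(T_i)|\ge j\ge 1$ and $P(T,\lambda)=\lambda(\rho_T)+\prod_{i=1}^{j}P(T_i,\lambda_i)$; by the inductive hypothesis each factor $P(T_i,\lambda_i)$ has non-negative coefficients and degree $|L(T_i)|\ge 1$, so their product has non-negative coefficients and, since the top-degree parts are nonzero and cannot cancel, degree exactly $\sum_i|L(T_i)|=|L(T)|\ge 1$; adding the degree-$1$ monomial $\lambda(\rho_T)$ affects neither the degree nor non-negativity. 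Hence $\deg P(T,\lambda)=|L(T)|$.

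Next I would track the numbers of leaves and of reticulations through steps (A), (B'), (C) of the construction of $(T_N,\mu_N)$. Step (A) only adds new internal vertices $v_i$ of indegree $1$ and outdegree $2$, leaves every original vertex's indegree unchanged, and keeps every affected top vertex internal (its outdegree drops but stays positive); so $N'$ still has $n$ leaves and $k$ reticulations. Since after (A) the reticulation cycles are pairwise vertex-disjoint (as observed in the proof of \Cref{unfolding-reverse}), each reticulation lies in a unique reticulation cycle and every iteration of (B') is well-defined and operates on a single reticulation $v$. A direct check of the local effect of one iteration shows: the indegree of $v$ drops from $2$ to $1$ while its outdegree is unchanged, so $v$ ceases to be a reticulation, no new reticulation is created, and $v$'s leaf/non-leaf status is preserved; each parent $u,u'$ of $v$ has exactly one out-arc rerouted to a fresh leaf ($p$, resp.\ $p'$), so the out-degrees of $u$ and $u'$ are unchanged; and $v_t$ only gains out-arcs. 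Therefore each iteration decreases the number of reticulations by exactly one and increases the number of leaves by exactly two. After the $k$ iterations that remove all reticulations we have added $2k$ leaves, and step (C) is a pure relabelling; hence $|L(T_N)|=n+2k$.

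Combining the two parts with $Q(N)=P(T_N,\mu_N)$ gives $\deg Q(N)=|L(T_N)|=n+2k$. I expect the combinatorial step to be the only place that needs care: the per-vertex bookkeeping showing that (A) and (B') change neither $n$ nor $k$ beyond the deliberate creation of two leaves per reticulation, and in particular the appeal to vertex-disjointness of the reticulation cycles after (A), which is what guarantees (B') runs exactly $k$ times. The degree identity for $P$ is routine once non-negativity of the coefficients excludes cancellation.
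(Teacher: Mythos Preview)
Your proof is correct and follows the same two-step outline as the paper: show $\deg P(T,\lambda)=|L(T)|$, then count $|L(T_N)|=n+2k$. The one substantive difference is in how you establish the first identity. The paper appeals to Liu's correspondence between monomials of $P$ and \emph{primary subtrees} of $T$ (so the top-degree term comes from the subtree $T_N$ itself), whereas you prove $\deg P(T,\lambda)=|L(T)|$ directly by structural induction, carrying non-negativity of the coefficients to preclude cancellation in the product. Your route is more elementary and self-contained; the paper's route buys a combinatorial interpretation of every monomial, not just the leading one. The leaf-count for $T_N$ is argued identically in spirit, with your per-step bookkeeping for (A) and (B') being more explicit than the paper's one-line observation.
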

\begin{proof}
We have defined~$Q(N)$ as 
the polynomial $P$ of the special pair~$(T_N,\mu_N)$. 
By a similar argument to the one presented in [\citep{liu2021tree}, Lemmas 2.3 and 2.4], 
each term of the polynomial $Q(N)$ corresponds to a rooted subtree~$S_N$ of~$T_N$, 
where~$S_N$ and~$T_N$ share the same root, and any leaf of~$T_N$ is a leaf of~$S_N$ 
or a descendant of a leaf of~$S_N$
(for example, the term~$cx^ar^bq^dy^ez^fs^g$ in~$Q(N)$ corresponds to a primary subtree of~$T_N$ 
that contains $a+b+d+e+f+g$ leaves: $a$ labelled $x$, $b$ \rev{labelled} $r$, $d$ \rev{labelled} $q$, $e$ \rev{labelled} $y$, $f$ \rev{labelled} $z$, and $g$ \rev{labelled} $s$).
Such rooted subtrees are called \emph{primary subtrees}.
The coefficient~$c$ of the term counts exactly how many primary subtrees exist with matching leaves.
The degree of the polynomial $Q(N)$ is then the greatest number of 
leaves across all primary subtrees of~$T_N$. 
This corresponds to, amongst possibly other primary subtrees, the primary subtree 
that is isomorphic to~$T_N$.
To count the number of leaves in~$T_N$, observe that we add two 
leaves labelled~$q$ every time we expand a reticulation cycle.
It follows that the number of leaves in~$T_N$, and therefore the degree of the polynomial~$Q(N)$, is~$n+2k$. 
\end{proof}

\begin{remark}
As with the polynomial $F$, we
observe that the polynomial~$Q(N)$ can be obtained directly from~$N$ by 
defining a polynomial~$Q(N,v)$ for each vertex~$v$ of~$N=(V,A)$ as follows, and letting~$Q(N)=Q(N,\rho)$.
\begin{itemize}
	\item If~$v$ is a leaf then
	\[
	Q(N,v) = \begin{cases} r &\quad \mbox{if~$v$ is a reticulation,}\\
	x &\quad \mbox{otherwise}
	\end{cases}
	\]
	\item Otherwise, $v$ is the top vertex of~$c\geq 0$ reticulation cycles, with $w_1,w_1',\ldots, w_c,w_c'$ children of~$v$ such that each pair~$(w_i,w_i')$ is in the same reticulation cycle with reticulation~$v_i$, and~$w_{c+1},\ldots ,w_{d-c}$ the other children of~$v$ (with~$d$ the outdegree of~$v$). Let~$R(N,v) = \prod_{i=1}^{c} [s + Q'(N,w_i)Q'(N,w_i')Q(N,v_i)] \prod_{i=c+1}^{d-c}Q'(N,w_i)$ where
	\[
	Q'(N,v) =
	\begin{cases}
	q &\quad\mbox{ if~$v$ is a reticulation,}\\
	Q(N,v) &\quad\mbox{ otherwise.}\\
	\end{cases}
	\]
	Then we have
	\[
	Q(N,v)=
	\begin{cases}
	    z + R(N,v)
	        &\quad\mbox{ if~$v$ is a reticulation,}\\
	    y + R(N,v)
	        &\quad\mbox{ otherwise,}
	\end{cases}
	\]
\end{itemize}
\end{remark}
\rev{See \Cref{fig:cactus} for an example of a~$Q$ polynomial for a given network.}

\section{Cactuses and leaf-labellings}\label{sec:unrooted}

In this section, we show that the invariant $Q$ can be used to give 
an invariant for cactuses (which are undirected), and that it is 
relatively simple to extend its definition so as
to give an invariant for partially leaf-labelled rooted cactuses and cactuses
(note that for cactuses a leaf is a vertex with degree 1).

First, recall, that a cactus is an (undirected) graph in which any two cycles are 
edge-disjoint. For a cactus~$G$, let $R(G)$ be the set of
all rooted cactuses that can be obtained from~$G$ by picking some vertex to be the root 
and choosing some orientation for each edge in $G$. Note that~$R(G)$ may contain isomorphic rooted cactuses.
The following observation follows a similar idea to the one used in \citep[Lemma 3.1]{liu2021tree}. 

\begin{lemma}\label{lem:direct}
Suppose that $G,G'$ are undirected cactuses.
Then $G$ and $G'$ are isomorphic if and
only if $R(G)\sim R(G')$ (i.e., there is a bijection $f:R(G)\to R(G')$ 
such that $f(D)$ is isomorphic to~$D$ for all~$D\in R(G)$).
\end{lemma}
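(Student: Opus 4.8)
The plan is to prove the two directions separately, with the forward direction being essentially immediate and the reverse direction requiring a careful construction of the required bijection. For the forward direction, suppose $\varphi: V(G) \to V(G')$ is an isomorphism of undirected cactuses. Then $\varphi$ carries any rooting-and-orientation of $G$ to a rooting-and-orientation of $G'$: given $D \in R(G)$ obtained by choosing root $v$ and an orientation of every edge, let $f(D)$ be the rooted cactus obtained from $G'$ by choosing root $\varphi(v)$ and orienting each edge $\{\varphi(a),\varphi(b)\}$ in the direction corresponding to how $\{a,b\}$ is oriented in $D$. This $f$ is clearly a bijection $R(G) \to R(G')$, and by construction $f(D)$ is isomorphic to $D$ (via $\varphi$ itself, now viewed as a map of directed graphs). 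So $R(G) \sim R(G')$.

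For the reverse direction, suppose $R(G) \sim R(G')$ via a bijection $f$. The idea, following \citep[Lemma 3.1]{liu2021tree}, is to recover an isomorphism $G \cong G'$ from the data of the rootings. First I would observe that $|V(G)| = |V(G')|$ and $|E(G)| = |E(G')|$, since $|R(G)| = |V(G)| \cdot 2^{|E(G)|}$ only depends on these two numbers and any acyclic orientation... — actually one must be slightly careful here: not every choice of root and orientation yields a \emph{rooted cactus} (the orientation must be acyclic and must admit directed paths from the root to all vertices), so $|R(G)|$ is not simply $|V(G)|2^{|E(G)|}$. However, the number of valid (root, orientation) pairs is still a quantity determined by the isomorphism type of $G$, and the relation $R(G)\sim R(G')$ forces these counts to agree. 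The cleanest route is: pick any vertex $v$ of $G$ and any rooted cactus $D_v \in R(G)$ with root $v$ (such exists — orient every edge away from $v$ along a spanning structure; more precisely, since $G$ is connected one can always find an acyclic orientation rooted at $v$, e.g.\ by a BFS/DFS orientation that also handles cycles consistently). Then $f(D_v) \in R(G')$ is a rooted cactus isomorphic to $D_v$; let $v' $ be its root. One then shows that the assignment $v \mapsto v'$ is well-defined up to the choice of $D_v$ and extends to a graph isomorphism; alternatively, and more robustly, one fixes a \emph{single} $D_v$, takes the isomorphism $\psi: D_v \to f(D_v)$ of rooted cactuses guaranteed by $R(G)\sim R(G')$, and observes that forgetting directions, $\psi$ is an isomorphism of the underlying undirected cactuses $G \to G'$, since the underlying undirected graph of $D_v$ is $G$ and the underlying undirected graph of $f(D_v)$ is $G'$ (because $f(D_v)\in R(G')$ means it is obtained by rooting and orienting $G'$, so its underlying graph is exactly $G'$).

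In fact this last observation short-circuits most of the work: the key point is that \emph{every element of $R(G')$ has underlying undirected graph equal to $G'$}, by definition of $R(G')$. So it suffices to exhibit a single $D \in R(G)$ together with the isomorphism $f(D) \cong D$, forget orientations on both sides, and we are done. The only thing that needs checking is that $R(G)$ is nonempty, i.e.\ that every connected undirected cactus admits at least one rooting making it a rooted cactus; this is standard (root at any vertex $v$, and orient each edge $\{a,b\}$ from the endpoint nearer $v$ to the endpoint farther from $v$, breaking ties within a cycle by going around the cycle in one direction from the vertex of the cycle closest to $v$ — the result is acyclic with all vertices reachable from $v$).

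\textbf{Main obstacle.} The only genuinely delicate point is the bookkeeping around which (root, orientation) pairs actually produce valid rooted cactuses, and making sure the bijection $f$ respects underlying undirected structure rather than just matching up abstract directed graphs. Once one notes that membership in $R(G')$ pins down the underlying undirected graph to be exactly $G'$ (not merely \emph{a} cactus isomorphic to $G'$), the reverse direction collapses to: take any $D\in R(G)$, transport it via $f$ to $f(D)\in R(G')$, use the rooted-cactus isomorphism $D\cong f(D)$, and forget directions. So the bulk of a careful write-up is just verifying $R(G)\neq\emptyset$ and stating the "forget orientations" step precisely.
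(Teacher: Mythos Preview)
Your proposal is correct and, once you arrive at the ``short-circuit'' observation, it is exactly the paper's argument: pick any $D\in R(G)$, note that $f(D)\in R(G')$ has underlying undirected graph $G'$ while $D$ has underlying undirected graph $G$, and forget orientations in the isomorphism $D\cong f(D)$. The paper's proof is just those two sentences (plus ``clearly'' for the forward direction); your additional check that $R(G)\neq\emptyset$ is a reasonable point of rigor that the paper leaves implicit.
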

\begin{proof}
If~$G$ and~$G'$ are isomorphic, then clearly $R(G)\sim R(G')$. 
Conversely, if $R(G)\sim R(G')$ then pick any~$D\in R(G)$. 
Then there is a~$D'\in R(G')$ that is isomorphic to~$D$. 
Since~$G$ and~$G'$ are the underlying undirected graphs of~$D$ and~$D'$, respectively, 
it follows that~$G$ and~$G'$ are isomorphic.
\end{proof}

Now suppose that $C$ is any complete polynomial invariant for rooted cactuses 
with $C(N) \in \mathbb Z[t_1,\dots,t_m]$ for some variables $t_1,\dots,t_m$, 
and $C(N)$ is irreducible in $\mathbb Z[t_1,\dots,t_m]$, for any rooted cactus $N$.
Given a cactus $G$, we define the 
polynomial $C_u$ by
$$
C_u(G) = \prod_{D \in R(G)} C(D). 
$$

Note that this polynomial can have large degree in general, 
depending on the number of cycles and the size of each cycle 
(larger sizes lead to more rootings).
The following result is related to the observation 
following \citep[Theorem 3.2]{liu2021tree}.

\begin{theorem}
The polynomial $C_u$ is a complete invariant for cactuses.
\end{theorem}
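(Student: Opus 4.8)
The plan is to show both directions of the ``complete invariant'' claim: first that $C_u$ is invariant under isomorphism, and then that equality of the polynomials forces an isomorphism. For the easy direction, if $G$ is isomorphic to $G'$ then by \Cref{lem:direct} we have $R(G)\sim R(G')$, i.e.\ there is a bijection $f:R(G)\to R(G')$ with $f(D)$ isomorphic to $D$ for every $D\in R(G)$. Since $C$ is an invariant for rooted cactuses, $C(f(D))=C(D)$ for each such $D$, and hence $C_u(G)=\prod_{D\in R(G)}C(D)=\prod_{D'\in R(G')}C(D')=C_u(G')$.

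For the converse, suppose $C_u(G)=C_u(G')$. The key point is that $C_u(G)$ and $C_u(G')$ are two factorizations of the same polynomial in $\mathbb{Z}[t_1,\dots,t_m]$ into irreducibles (using the hypothesis that $C(N)$ is irreducible for every rooted cactus $N$). First I would note that $\mathbb{Z}[t_1,\dots,t_m]$ is a unique factorization domain, so the multiset of irreducible factors of $C_u(G)$ equals that of $C_u(G')$, up to units and reordering. I would want to pin down the units: each $C(D)=P(T_D,\mu_D)$ has constant term $0$ but leading behaviour (e.g.\ a monomial term such as $x^{\deg}$ or at least positive coefficients throughout) that rules out a sign ambiguity, so the only unit is $+1$ and the factorizations agree as multisets of \emph{actual} polynomials: the multiset $\{C(D): D\in R(G)\}$ equals the multiset $\{C(D'):D'\in R(G')\}$.

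Next I would convert this multiset equality of polynomials into a multiset equality of rooted cactuses. Because $C$ is a \emph{complete} invariant for rooted cactuses, $C(D)=C(D')$ iff $D$ is isomorphic to $D'$; so the map sending a rooted cactus to its $C$-polynomial is injective on isomorphism classes. Hence equality of the two multisets of polynomials yields a bijection $f:R(G)\to R(G')$ matching each $D$ with some $D'$ having $C(D')=C(D)$, i.e.\ $D'$ isomorphic to $D$ — which is exactly $R(G)\sim R(G')$. Applying \Cref{lem:direct} one more time gives that $G$ is isomorphic to $G'$, completing the proof.

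The main obstacle is the bookkeeping around units and multiplicities: one must be careful that matching \emph{multisets} of irreducible factors (with correct multiplicities) really does follow from unique factorization and that no sign/unit ambiguity creeps in, since $R(G)$ genuinely can contain repeated (isomorphic) rootings and the same irreducible factor may occur several times. Once that is handled cleanly, everything else is a direct application of \Cref{lem:direct} and the completeness of $C$. In the write-up I would instantiate $C=Q$ from \Cref{thm:cactus-invariant2} (which is irreducible because each $Q(N)=P(T_N,\mu_N)$ is irreducible by the argument inside the proof of \Cref{variables}), thereby producing a concrete complete invariant $Q_u$ for cactuses; the analogous leaf-labelled statements follow by the same argument using the leaf-labelled version of $Q$.
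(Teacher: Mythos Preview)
Your proposal is correct and follows essentially the same approach as the paper: invoke \Cref{lem:direct} together with the fact that $\mathbb{Z}[t_1,\dots,t_m]$ is a unique factorization domain and that each $C(D)$ is irreducible. The paper's proof is two sentences; yours spells out the same argument in more detail, including the careful bookkeeping on multiplicities and units that the paper leaves implicit. One small remark: your treatment of the unit ambiguity appeals to the specific shape of $P(T_D,\mu_D)$ (nonnegative coefficients), which is a property of $Q$ rather than of an arbitrary $C$ as in the theorem statement; the paper does not address this point either, so your extra care is appropriate, but strictly speaking it belongs to the corollary about $Q_u$ rather than to the general theorem.
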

\begin{proof}
By assumption, for each $D \in R(G)$, $C(D)$ is an irreducible 
polynomial in $\mathbb Z[t_1,\dots,t_m]$. 
The result now follows from Lemma~\ref{lem:direct} and the fact that
$\mathbb Z[t_1,\dots,t_m]$ is a unique factorization domain.
\end{proof}

\begin{corollary}
For the polynomial $Q$ defined in 
(\ref{Qequation}), $Q_u$ is a complete invariant for cactuses.
\end{corollary}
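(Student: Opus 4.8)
The plan is to derive this corollary directly from the preceding theorem, which states that $C_u$ is a complete invariant for cactuses for \emph{any} complete polynomial invariant $C$ for rooted cactuses that is irreducible in its polynomial ring. So all I need to verify is that $Q$ satisfies the two hypotheses of that theorem: (1) $Q$ is a complete polynomial invariant for rooted cactuses, and (2) for every rooted cactus $N$, the polynomial $Q(N)$ is irreducible in the appropriate polynomial ring $\mathbb{Z}[x,y,z,q,r,s]$ (or the relevant sub-ring, allowing for the binary case).

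First I would invoke Theorem~\ref{thm:cactus-invariant2}, which already establishes that $Q$ is a complete invariant for the class of rooted cactuses, so hypothesis (1) holds immediately. The only remaining point is irreducibility. For this I would appeal to the argument already used inside the proof of Theorem~\ref{variables}: given a special pair $(T_N,\mu_N)$, one forms the polynomial $P'$ in $\mathbb{Z}[x,y]$ by substituting $x$ for every leaf variable and $y$ for every variable in $S\cup\{z\}$, obtaining Liu's polynomial $B(T_N)$, which is irreducible in $\mathbb{Z}[x,y]$ by \citep[Lem. 2.6]{liu2021tree}. Since a specialization of a reducible polynomial is reducible, it follows that $Q(N)=P(T_N,\mu_N)$ is irreducible in $\mathbb{Z}[x,y,z,q,r,s]$. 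By \Cref{lem:Qspecialpair}, $(T_N,\mu_N)$ is indeed a special pair, so this argument applies, and hypothesis (2) holds.

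With both hypotheses verified, the corollary is then an immediate instance of the preceding theorem applied to $C=Q$, $m=6$, and $(t_1,\dots,t_6)=(x,y,z,q,r,s)$ (or $m=5$ in the binary case). The main — and really only — obstacle is making the irreducibility claim rigorous: one must be slightly careful that the substitution used in Theorem~\ref{variables} genuinely lands every $Q(N)$ on $B(T_N)$ and that $T_N$ has at least two vertices whenever $N$ is not trivial, so that $B(T_N)$ is a genuinely irreducible (non-constant, non-unit) polynomial; the degenerate single-vertex case, where $Q(N)=x$, is itself irreducible, so it causes no trouble. Everything else is a direct citation of results already proved in the excerpt, so the proof is short.

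\begin{proof}
By \Cref{thm:cactus-invariant2}, $Q$ is a complete invariant for rooted cactuses, with $Q(N)\in\mathbb{Z}[x,y,z,q,r,s]$. For any rooted cactus $N$, \Cref{lem:Qspecialpair} shows $(T_N,\mu_N)$ is a special pair, and the irreducibility argument in the proof of \Cref{variables} applies: substituting $x$ for each leaf variable and $y$ for each variable in $S\cup\{z\}$ turns $Q(N)=P(T_N,\mu_N)$ into Liu's polynomial $B(T_N)$, which is irreducible by \citep[Lem. 2.6]{liu2021tree}; hence $Q(N)$ is irreducible. The claim now follows by applying the previous theorem with $C=Q$.
\end{proof}
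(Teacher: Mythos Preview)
Your proof is correct and matches the paper's intended argument: the paper states the corollary without proof, leaving it as an immediate consequence of the preceding theorem together with the irreducibility of $Q(N)=P(T_N,\mu_N)$, which was already established in the proof of Theorem~\ref{variables} for any special pair. You have simply made that implicit step explicit, and your handling of the degenerate single-vertex case is a nice touch of care.
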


Note that $Q_u$ restricted to the class of rooted trees is not the
same as Liu's polynomial since our rooting procedure differs 
the one in \citep{liu2021tree}.

We now consider \emph{leaf-labelled} rooted cactuses \vince{(see e.g. \citep{huber2021space})}. 
As mentioned in the introduction, 
in phylogenetics it is common to consider (di)graphs 
in which the leaves are labeled by some set of species \vince{(see e.g. \cite[Section 10.3]{steel2016phylogeny}).}
Using our results on special pairs it is straight-forward to 
obtain a complete invariant for partially 
leaf-labelled rooted cactuses (i.e. ones in which some subset of the 
leaves is labelled), in a similar
way used to deal with leaf-labelled trees in \citep{liu2021tree}.
Indeed, given a rooted cactus $N$ and a map $\phi : L' \to \{x_1,x_2,\dots, x_p\}$ 
from a subset~$L'$ of the leaves of $N$ to some 
collection of species $\{x_1,x_2,\dots x_p\}$,
we can define a special pair $(T_N,\mu_N)$ as  
in the definition of $Q$ above, except that in step (C), we give all
remaining leaf vertices $v$ the label $x_i$ (instead of $x$) if
$\phi(v)=x_i$ and label~$x$ (as before) if~$v\notin L'$. 
Then using the same argument as in Theorem~\ref{thm:cactus-invariant2}, it
can be seen that this leads to a complete invariant for 
the class of partially leaf-labelled rooted cactuses.
Moreover, using a partially leaf-labelled version of 
Lemma~\ref{lem:direct}
(in which we only select roots at vertices that are unlabelled), it is possible to also 
define a polynomial invariant for the collection of partially leaf-labelled cactuses
\vince{(see e.g. \cite{hayamizu2020recognizing})},
and to show that it is a complete invariant for this class.

\section{Discussion}\label{sec:discussion}

We have introduced a new complete polynomial invariant $Q$ for the class of rooted cactuses 
which has at most 6 variables, and also shown how to use $Q$ to obtain a complete invariant 
for cactuses. In addition, we have shown that the degree of the 
$Q$ polynomial for a rooted cactus with~$n$ leaves and~$k$ reticulations is~$n+2k$ [\Cref{lem:QDegree}].
We have also introduced a polynomial invariant~$F$ for rooted cactuses which 
has only $5$ variables regardless of the number of reticulations in the network,
and more naturally respects the structure of the network. However,
$F$ is only a complete invariant for rooted cactuses without elementary vertices
and the degree of $F$ for a rooted cactus may be exponential in 
the number of reticulations in the rooted cactus.


It could be of interest to look for other polynomial invariants of rooted cactuses/cactuses
which shed different light on their structure. 
For example, in \citep{gordon1989greedoid} it is asked if 
the greedoid polynomial invariant introduced for rooted trees gives 
a complete invariant for larger classes of directed rooted graphs,
and so it is natural to ask if it gives a complete invariant for rooted cactuses.
However, in recent work \citep{yow2019discovering}
it is shown that, even for a simple example of 
a rooted cactus \citep[Fig. 3]{yow2019discovering}, the polynomial does not  
reflect the structure of the directed graph,
so this seems unlikely to be the case. Another 
possibility would be to consider the TVT polynomial in \citep{tedford2009tutte}
or the ``B-polynomial" for digraphs 
introduced in \citep{awan2020tutte}; the TVT polynomial 
gives a complete invariant for rooted trees \vince{\citep[p.569]{tedford2009tutte} but it is not
known whether this is the case for the B-polynomial \citep[Question 10.6]{awan2020tutte}.}

Another natural question is to look for complete invariants for broader
classes of (di)graphs that are of interest in phylogenetics.
For example, we could consider the classes $\mathcal C$ and $\mathcal C'$
that consist of directed acyclic graphs that 
are uniquely determined by the graphs 
obtained by applying steps (A),(B), and (C) or (A), (B'), and (C),
used to obtain a special pair in the definition of $F$ and $Q$, respectively.
It would be interesting to know if we could obtain
complete invariants for $\mathcal C$ and $\mathcal C'$
in a similar way to $F$ and $Q$ and, if so, which well-known classes  
of (non leaf-labelled) phylogenetic networks are contained in these classes
(note that for generalizing the approach used to obtain $F$, results on so-called
stable networks in~\citep{huber2016folding} might be relevant).
More specifically, it would be interesting to know for which $k \ge 0$ 
is the collection of level-$k$ networks
(network in which every biconnected component contains 
at most $k$ reticulations) contained in either $\mathcal C$ or $\mathcal C'$?
Note that (non-leaf-labelled) level-0 and level-$1$ networks are rooted trees and 
cactuses, respectively, so $k=2$ is the first case of interest.
\rev{Extra care must be taken in this $k=2$ case, as reticulation cycles may have more than one top vertex. In particular, this means unfolding and expanding will have to be redefined, perhaps depending on the comparability of top vertices of reticulation cycles.}

As we have seen, different constructions of polynomials give rise to advantages and disadvantages 
with respect to the number of variables, the polynomial size, and the 
graph classes for which it is complete. Ideally, it would be useful to 
find complete invariants that can be computed in polynomial time for
special classes of phylogenetic networks\footnote{This may be too much to ask in general
in light of the fact DAG isomorphism checking is at least as hard as the graph 
isomorphism problem}. However, even if it is not possible 
to find such invariants, it could still 
be interesting to look for polynomials like the Tutte polynomial which
may not provide complete invariants but may be easier to compute and can still 
provide useful information about the underlying structure of the network.



\bibliographystyle{elsarticle-harv}
\bibliography{alpha}

\begin{thebibliography}{20}
\expandafter\ifx\csname natexlab\endcsname\relax\def\natexlab#1{#1}\fi
\providecommand{\url}[1]{\texttt{#1}}
\providecommand{\href}[2]{#2}
\providecommand{\path}[1]{#1}
\providecommand{\DOIprefix}{doi:}
\providecommand{\ArXivprefix}{arXiv:}
\providecommand{\URLprefix}{URL: }
\providecommand{\Pubmedprefix}{pmid:}
\providecommand{\doi}[1]{\href{http://dx.doi.org/#1}{\path{#1}}}
\providecommand{\Pubmed}[1]{\href{pmid:#1}{\path{#1}}}
\providecommand{\bibinfo}[2]{#2}
\ifx\xfnm\relax \def\xfnm[#1]{\unskip,\space#1}\fi
\bibitem[{Awan and Bernardi(2020)}]{awan2020tutte}
\bibinfo{author}{Awan, J.}, \bibinfo{author}{Bernardi, O.},
  \bibinfo{year}{2020}.
\newblock \bibinfo{title}{Tutte polynomials for directed graphs}.
\newblock \bibinfo{journal}{Journal of Combinatorial Theory, Series B}
  \bibinfo{volume}{140}, \bibinfo{pages}{192--247}.
\bibitem[{Balaji et~al.(2020)Balaji, Bapat and Goel}]{balaji2020resistance}
\bibinfo{author}{Balaji, R.}, \bibinfo{author}{Bapat, R.B.},
  \bibinfo{author}{Goel, S.}, \bibinfo{year}{2020}.
\newblock \bibinfo{title}{Resistance distance in directed cactus graphs}.
\newblock \bibinfo{journal}{The Electronic Journal of Linear Algebra}
  \bibinfo{volume}{36}, \bibinfo{pages}{277--292}.
\bibitem[{Bapteste et~al.(2013)Bapteste, van Iersel, Janke, Kelchner, Kelk,
  McInerney, Morrison, Nakhleh, Steel, Stougie et~al.}]{bapteste2013networks}
\bibinfo{author}{Bapteste, E.}, \bibinfo{author}{van Iersel, L.},
  \bibinfo{author}{Janke, A.}, \bibinfo{author}{Kelchner, S.},
  \bibinfo{author}{Kelk, S.}, \bibinfo{author}{McInerney, J.O.},
  \bibinfo{author}{Morrison, D.A.}, \bibinfo{author}{Nakhleh, L.},
  \bibinfo{author}{Steel, M.}, \bibinfo{author}{Stougie, L.}, et~al.,
  \bibinfo{year}{2013}.
\newblock \bibinfo{title}{Networks: expanding evolutionary thinking}.
\newblock \bibinfo{journal}{Trends in Genetics} \bibinfo{volume}{29},
  \bibinfo{pages}{439--441}.
\bibitem[{Bollob{\'a}s(1998)}]{bollobas1998modern}
\bibinfo{author}{Bollob{\'a}s, B.}, \bibinfo{year}{1998}.
\newblock \bibinfo{title}{Modern graph theory}. volume \bibinfo{volume}{184}.
\newblock \bibinfo{publisher}{Springer Science \& Business Media}.
\bibitem[{Cardona et~al.(2008)Cardona, Llabr{\'e}s, Rossell{\'o} and
  Valiente}]{cardona2008metrics}
\bibinfo{author}{Cardona, G.}, \bibinfo{author}{Llabr{\'e}s, M.},
  \bibinfo{author}{Rossell{\'o}, F.}, \bibinfo{author}{Valiente, G.},
  \bibinfo{year}{2008}.
\newblock \bibinfo{title}{Metrics for phylogenetic networks i: Generalizations
  of the robinson-foulds metric}.
\newblock \bibinfo{journal}{IEEE/ACM Transactions on Computational Biology and
  Bioinformatics} \bibinfo{volume}{6}, \bibinfo{pages}{46--61}.
\bibitem[{Gordon and McMahon(1989)}]{gordon1989greedoid}
\bibinfo{author}{Gordon, G.}, \bibinfo{author}{McMahon, E.},
  \bibinfo{year}{1989}.
\newblock \bibinfo{title}{A greedoid polynomial which distinguishes rooted
  arborescences}.
\newblock \bibinfo{journal}{Proceedings of the American Mathematical Society}
  \bibinfo{volume}{107}, \bibinfo{pages}{287--298}.
\bibitem[{Hayamizu et~al.(2020)Hayamizu, Huber, Moulton and
  Murakami}]{hayamizu2020recognizing}
\bibinfo{author}{Hayamizu, M.}, \bibinfo{author}{Huber, K.T.},
  \bibinfo{author}{Moulton, V.}, \bibinfo{author}{Murakami, Y.},
  \bibinfo{year}{2020}.
\newblock \bibinfo{title}{Recognizing and realizing cactus metrics}.
\newblock \bibinfo{journal}{Information Processing Letters}
  \bibinfo{volume}{157}, \bibinfo{pages}{105916}.
\bibitem[{Huber et~al.(2021)Huber, Moulton, Owen, Spillner and
  John}]{huber2021space}
\bibinfo{author}{Huber, K.T.}, \bibinfo{author}{Moulton, V.},
  \bibinfo{author}{Owen, M.}, \bibinfo{author}{Spillner, A.},
  \bibinfo{author}{John, K.S.}, \bibinfo{year}{2021}.
\newblock \bibinfo{title}{The space of equidistant phylogenetic cactuses}.
\newblock \bibinfo{journal}{arXiv preprint arXiv:2111.06115} .
\bibitem[{Huber et~al.(2016)Huber, Moulton, Steel and Wu}]{huber2016folding}
\bibinfo{author}{Huber, K.T.}, \bibinfo{author}{Moulton, V.},
  \bibinfo{author}{Steel, M.}, \bibinfo{author}{Wu, T.}, \bibinfo{year}{2016}.
\newblock \bibinfo{title}{Folding and unfolding phylogenetic trees and
  networks}.
\newblock \bibinfo{journal}{Journal of mathematical biology}
  \bibinfo{volume}{73}, \bibinfo{pages}{1761--1780}.
\bibitem[{Husimi(1950)}]{husimi1950note}
\bibinfo{author}{Husimi, K.}, \bibinfo{year}{1950}.
\newblock \bibinfo{title}{Note on mayers' theory of cluster integrals}.
\newblock \bibinfo{journal}{The Journal of Chemical Physics}
  \bibinfo{volume}{18}, \bibinfo{pages}{682--684}.
\bibitem[{Janssen and Liu(2021)}]{janssen2021comparing}
\bibinfo{author}{Janssen, R.}, \bibinfo{author}{Liu, P.}, \bibinfo{year}{2021}.
\newblock \bibinfo{title}{Comparing the topology of phylogenetic network
  generators}.
\newblock \bibinfo{journal}{Journal of Bioinformatics and Computational
  Biology} \bibinfo{volume}{19}, \bibinfo{pages}{2140012}.
\bibitem[{Kang et~al.(2014)Kang, Bai, Shan and Nguyen}]{kang20142}
\bibinfo{author}{Kang, L.}, \bibinfo{author}{Bai, C.}, \bibinfo{author}{Shan,
  E.}, \bibinfo{author}{Nguyen, K.}, \bibinfo{year}{2014}.
\newblock \bibinfo{title}{The 2-maxian problem on cactus graphs}.
\newblock \bibinfo{journal}{Discrete Optimization} \bibinfo{volume}{13},
  \bibinfo{pages}{16--22}.
\bibitem[{Liu(2021)}]{liu2021tree}
\bibinfo{author}{Liu, P.}, \bibinfo{year}{2021}.
\newblock \bibinfo{title}{A tree distinguishing polynomial}.
\newblock \bibinfo{journal}{Discrete Applied Mathematics}
  \bibinfo{volume}{288}, \bibinfo{pages}{1--8}.
\bibitem[{Liu et~al.(2022)Liu, Biller, Gould and Colijn}]{liu2022analyzing}
\bibinfo{author}{Liu, P.}, \bibinfo{author}{Biller, P.},
  \bibinfo{author}{Gould, M.}, \bibinfo{author}{Colijn, C.},
  \bibinfo{year}{2022}.
\newblock \bibinfo{title}{Analyzing phylogenetic trees with a tree lattice
  coordinate system and a graph polynomial}.
\newblock \bibinfo{journal}{Systematic Biology} .
\bibitem[{Liu(2019)}]{liu2019generalized}
\bibinfo{author}{Liu, S.}, \bibinfo{year}{2019}.
\newblock \bibinfo{title}{Generalized permanental polynomials of graphs}.
\newblock \bibinfo{journal}{Symmetry} \bibinfo{volume}{11},
  \bibinfo{pages}{242}.
\bibitem[{Paten et~al.(2011)Paten, Diekhans, Earl, John, Ma, Suh and
  Haussler}]{paten2011cactus}
\bibinfo{author}{Paten, B.}, \bibinfo{author}{Diekhans, M.},
  \bibinfo{author}{Earl, D.}, \bibinfo{author}{John, J.S.},
  \bibinfo{author}{Ma, J.}, \bibinfo{author}{Suh, B.},
  \bibinfo{author}{Haussler, D.}, \bibinfo{year}{2011}.
\newblock \bibinfo{title}{Cactus graphs for genome comparisons}.
\newblock \bibinfo{journal}{Journal of Computational Biology}
  \bibinfo{volume}{18}, \bibinfo{pages}{469--481}.
\newblock \DOIprefix\doi{10.1089/cmb.2010.0252}.
\bibitem[{Pons et~al.(2022)Pons, Coronado, Hendriksen and
  Francis}]{pons2022polynomial}
\bibinfo{author}{Pons, J.C.}, \bibinfo{author}{Coronado, T.M.},
  \bibinfo{author}{Hendriksen, M.}, \bibinfo{author}{Francis, A.},
  \bibinfo{year}{2022}.
\newblock \bibinfo{title}{A polynomial invariant for a new class of
  phylogenetic networks}.
\newblock \bibinfo{journal}{Plos one} \bibinfo{volume}{17},
  \bibinfo{pages}{e0268181}.
\bibitem[{Steel(2016)}]{steel2016phylogeny}
\bibinfo{author}{Steel, M.}, \bibinfo{year}{2016}.
\newblock \bibinfo{title}{Phylogeny: discrete and random processes in
  evolution}.
\newblock \bibinfo{publisher}{SIAM}.
\bibitem[{Tedford(2009)}]{tedford2009tutte}
\bibinfo{author}{Tedford, S.J.}, \bibinfo{year}{2009}.
\newblock \bibinfo{title}{A tutte polynomial which distinguishes rooted
  unicyclic graphs}.
\newblock \bibinfo{journal}{European Journal of Combinatorics}
  \bibinfo{volume}{30}, \bibinfo{pages}{555--569}.
\bibitem[{Yow et~al.(2019)Yow, Chen, Leong and Ismail}]{yow2019discovering}
\bibinfo{author}{Yow, K.S.}, \bibinfo{author}{Chen, C.Y.},
  \bibinfo{author}{Leong, W.J.}, \bibinfo{author}{Ismail, F.},
  \bibinfo{year}{2019}.
\newblock \bibinfo{title}{Discovering factors of graph polynomials}.
\newblock \bibinfo{journal}{Menemui Matematik (Discovering Mathematics)}
  \bibinfo{volume}{41}, \bibinfo{pages}{88--95}.

\end{thebibliography}




\end{document}